\newtheorem{theorem}{Theorem}[section]
\newtheorem*{theorem*}{Theorem}
\newtheorem{lemma}{Lemma}[section]
\newtheorem{corollary}[theorem]{Corollary}
\newtheorem{proposition}{Proposition}[section]
\newtheorem{definition}[theorem]{Definition}
\newtheorem{remark}[theorem]{Remark}
\def\l{\lambda}
\def\p{\partial}
\def\R{\mathbb{R}}
\def\vp{\varphi}
\def\k{\kappa}
\numberwithin{equation}{section}
\begin{document}

\title[Eigenvalues estimates on  quaternionic K\"ahler manifolds]{Lower bounds for the first eigenvalue of the $p$-Laplacian on quaternionic K\"ahler manifolds}

\author{Kui Wang} \thanks{The research of the first author is supported by NSF of Jiangsu Province No. SBK2023020968}
\address{School of Mathematical Sciences, Soochow University, Suzhou, 215006, China}
\email{kuiwang@suda.edu.cn}

\author{Shaoheng Zhang} \thanks{}
\address{School of Mathematical Sciences, Soochow University, Suzhou, 215006, China}
\email{20234007008@stu.suda.edu.cn}

\subjclass[2010]{35P15, 53C26}

\keywords{Eigenvalue estimates, $p$-Laplacian, modulus of continuity, quaternionic K\"ahler manifolds}

\begin{abstract}
We study the first nonzero eigenvalues for the $p$-Laplacian on quaternionic K\"ahler  manifolds. 
Our first result is a lower bound for the first nonzero closed (Neumann) eigenvalue of the $p$-Laplacian on compact  quaternionic K\"ahler manifolds. Our second result is a lower bound for the first Dirichlet eigenvalue of the $p$-Laplacian on compact quaternionic K\"ahler manifolds with smooth boundary. Our results generalize corresponding results for the Laplacian eigenvalues on quaternionic K\"ahler manifolds proved in \cite{LW23}.

\mbox{}

\end{abstract}

\maketitle

\section{Introduction and Main Results}

    Let $(M^m,g)$ be an $m$-dimensional compact  manifold,  and the $p$-Laplacian of the metric $g$ is defined by
    \begin{align*}
    \Delta_p u:=\operatorname{div}(|\nabla u|^{p-2}\nabla u)
    \end{align*}
    for $p\in (1,\infty)$. The $p$-Laplace eigenvalue equation is
    \begin{align*}
    -\Delta_p u(x)=\lambda |u(x)|^{p-2}u(x), \quad x\in M,
    \end{align*}
    and $\l\in \R$ is called a closed eigenvalue  when $\p M=\emptyset$, a Dirichlet eigenvalue if $u(x)=0$ on $\p M$ when $\p M\neq \emptyset$, a Neumann eigenvalue  if $\p_\nu u(x)=0$ on $\p M$ when $\p M\neq \emptyset$. Here $\nu$ denotes the unit outer  normal to $\p M$.

Estimates of the first nonzero eigenvalues have been extensively studied in both mathematics and physics for a long time.  In the past few decades, a number of important results on lower bounds in terms of geometric data were obtained by various authors (\cite{AC11, BQ00,BS19,Cha84,DSW21,HWZ20,LL10,RSW20,YS94,  SWW19}). 
     One of the classical results is the following optimal lower bound for the first nonzero eigenvalue of the Laplacian.
    \begin{theorem}
    \label{thm classical}
        Let $(M^m, g)$ be a compact Riemannian manifold (possibly with a smooth convex boundary) with diameter $D$ and $\operatorname{Ric} \geq (m-1)\k$ for $\k\in \R$. 
        Let $\mu_{1}$ be the first nonzero eigenvalue of the Laplacian on $M$ (with Neumann boundary condition if $\p M \neq \emptyset$). Then
    \begin{align*}
        \mu_{1}\geq \bar{\mu}_1(m,\k,D),
        \end{align*}
        where $\bar{\mu}_1(m,\k,D)$ is the first nonzero Neumann eigenvalue of the one-dimensional eigenvalue problem
        \begin{align*}
        \vp'' - (m-1)T_{\k}\vp' =-\mu\vp
        \end{align*}
        on the interval $[-D/2, D/2]$, and $T_{\k}$ is defined in (\ref{eqn Tk}).
    \end{theorem}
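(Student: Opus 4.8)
The plan is to prove Theorem~\ref{thm classical} by the modulus of continuity method, in the spirit of Andrews--Clutterbuck; this is well suited to the parabolic point of view underlying the paper and to the $p$-Laplacian arguments to come. Let $\bar\vp$ be a first nonzero Neumann eigenfunction of the one--dimensional model
\begin{align*}
\bar\vp''-(m-1)T_\k\bar\vp'=-\bar{\mu}_1\bar\vp\ \ \text{on }[-D/2,D/2],\qquad \bar\vp'(\pm D/2)=0 .
\end{align*}
Because $T_\k$ is odd, a standard Sturm--Liouville analysis allows us to take $\bar\vp$ odd about the origin and increasing, with $\bar\vp'(0)>0$ and $\bar\vp>0$ on $(0,D/2]$; in particular $\bar\vp(s)\ge c_0 s$ on $[0,D/2]$ for some $c_0>0$. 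For a constant $C>0$ to be fixed below put $\psi(s,t):=Ce^{-\bar{\mu}_1 t}\bar\vp(s)$, so that $\psi_t=\psi''-(m-1)T_\k\psi'$ and $\psi$ is increasing in $s$.

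First I would replace $v$ by a heat flow. Let $v$ be a first nonzero eigenfunction of the Laplacian on $M$ (with $\p_\nu v=0$ if $\p M\neq\emptyset$); then $v$ is smooth, nonconstant and Lipschitz, and $u(x,t):=e^{-\mu_1 t}v(x)$ solves $\p_t u=\D u$ on $M$ with $\p_\nu u=0$ on $\p M$. Since $\bar\vp(s)\ge c_0 s$, $v$ is Lipschitz, and $\dist(x,y)\le D$ puts $\dist(x,y)/2$ into $[0,D/2]$, we may fix $C$ large enough that
\begin{align*}
u(y,0)-u(x,0)=v(y)-v(x)\ \le\ 2\psi\!\lf(\frac{\dist(x,y)}{2},0\ri)\qquad\text{for all }x,y\in M,
\end{align*}
that is, $\psi(\cdot,0)$ is a modulus of continuity for $u(\cdot,0)$. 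The claim to be proved is the preservation statement
\begin{align*}
u(y,t)-u(x,t)\ \le\ 2\psi\!\lf(\frac{\dist(x,y)}{2},t\ri)\qquad\text{for all }x,y\in M,\ t\ge0 .
\end{align*}

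The core of the argument is this preservation, which I would obtain by a maximum principle for $Z(x,y,t):=2\psi(\dist(x,y)/2,t)-u(y,t)+u(x,t)$, working with $Z+\e(1+t)$ to render the inequalities strict. If the regularized quantity first vanished at some $t_0>0$ and some pair $x_0\neq y_0$, we would fix a unit--speed minimizing geodesic $\g$ of length $L=\dist(x_0,y_0)\le D$ from $x_0$ to $y_0$, parametrized symmetrically on $[-L/2,L/2]$, and (Calabi's trick, to handle the cut locus) replace $\dist$ near $(x_0,y_0)$ by a smooth function that bounds it above with equality at $(x_0,y_0)$; the minimum of $Z(\cdot,\cdot,t_0)$ is retained. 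Then: (a) extending $\g$ a little past its endpoints, the first-- and second--order minimality conditions of $Z(\cdot,\cdot,t_0)$ in that tangential direction force $\langle\nabla u,\g'\rangle=\psi'(L/2,t_0)$ at $x_0$ and $y_0$ and give $\nabla^2u(\g',\g')$ at $x_0$ minus its value at $y_0$ to be $\ge -2\psi''(L/2,t_0)$; (b) varying $x$ and $y$ by a common parallel unit normal field of $\g$ and using the second variation of arclength evaluated on the model Jacobi field (solution of $f''+\k f=0$ with $f(\pm L/2)=1$), together with $\Ric\ge(m-1)\k$ and $\psi'(L/2,t_0)\ge0$, the sum over normal directions of $\nabla^2u(E_i,E_i)$ at $x_0$ minus its value at $y_0$ is $\ge 2(m-1)T_\k(L/2)\psi'(L/2,t_0)$ --- this is precisely where the model operator $\p_s^2-(m-1)T_\k\p_s$ enters; (c) if $\p M\neq\emptyset$, its convexity with $\p_\nu u=0$ excludes $x_0,y_0\in\p M$. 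Adding (a) and (b) gives $\D u(x_0,t_0)-\D u(y_0,t_0)\ge-2\psi_t(L/2,t_0)$, hence at $(x_0,y_0,t_0)$
\begin{align*}
\p_t\big(Z+\e(1+t)\big)=2\psi_t(L/2,t_0)+\D u(x_0,t_0)-\D u(y_0,t_0)+\e\ \ge\ \e\ >\ 0,
\end{align*}
contradicting the fact that $t_0$ is the first vanishing time; letting $\e\to0$ yields the preservation. I expect step (b) --- extracting the one--dimensional model operator from the index form along the connecting geodesic under only a Ricci lower bound --- to be the main technical obstacle.

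Finally I would let $t\to\infty$. Pick $x,y\in M$ with $v(y)>v(x)$ (possible since $v$ is nonconstant); multiplying the preserved inequality by $e^{\bar{\mu}_1 t}$ gives
\begin{align*}
e^{(\bar{\mu}_1-\mu_1)t}\big(v(y)-v(x)\big)\ \le\ 2\psi\!\lf(\frac{\dist(x,y)}{2},0\ri)\qquad\text{for all }t\ge0,
\end{align*}
whose right--hand side is finite and independent of $t$. Therefore $\bar{\mu}_1-\mu_1\le0$, i.e.\ $\mu_1\ge\bar{\mu}_1=\bar{\mu}_1(m,\k,D)$, which is the asserted bound (the convex--boundary case being accounted for by (c)).
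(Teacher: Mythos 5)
The paper does not supply its own proof of Theorem~\ref{thm classical}; it records it as a classical result, attributes the general $\kappa$ case to Kr\"oger \cite{Kr92} and Chen--Wang \cite{CW94,CW95}, and points to the modulus-of-continuity proof of Andrews--Clutterbuck \cite{AC13} (and the elliptic variant \cite{ZW17}). What you propose is precisely that Andrews--Clutterbuck parabolic route, and it is the same circle of ideas that the paper then generalizes in Section~3 (Theorem~\ref{thm 3.1}) and in Section~5. Your steps are sound: the one-dimensional eigenfunction $\bar\vp$ can indeed be taken odd and strictly increasing with $\bar\vp'(\pm D/2)=0$ (integrating $(c_\kappa^{m-1}\bar\vp')'=-\bar\mu_1 c_\kappa^{m-1}\bar\vp$ from $s$ to $D/2$ shows $\bar\vp'>0$ on $[0,D/2)$), whence $\bar\vp(s)\ge c_0 s$ and a large constant $C$ makes $\psi(\cdot,0)$ an initial modulus of continuity for $u(\cdot,0)=v$; the tangential second variation gives $\nabla^2 u(\g',\g')\big|_{y_0}-\nabla^2 u(\g',\g')\big|_{x_0}\le 2\psi''$, and the normal second variation with Jacobi field $c_\kappa(s)/c_\kappa(L/2)$ together with $\operatorname{Ric}\ge(m-1)\kappa$ gives $\sum_{i\ge 2}\big(\nabla^2 u(E_i,E_i)\big|_{y_0}-\nabla^2 u(E_i,E_i)\big|_{x_0}\big)\le -2(m-1)T_\kappa(L/2)\psi'$; summing yields $\Delta u(x_0)-\Delta u(y_0)\ge -2\psi_t$ and the contradiction with the first vanishing time, and $t\to\infty$ forces $\bar\mu_1\le\mu_1$. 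This matches, at the Riemannian level, the computations in the paper's proof of Theorem~\ref{thm 3.1} (there one further splits the normal directions into the quaternionic directions $I\g',J\g',K\g'$ with Jacobi field $c_{4\kappa}/c_{4\kappa}(s_0)$ and the remaining ones with $c_\kappa/c_\kappa(s_0)$, using Proposition~\ref{prop 2.1}; your step (b), which you flag as the potential obstacle, is exactly this routine integration by parts). The one point to tighten is step~(c): with $\p_\nu u=0$ the inward normal derivative of your quantity at a boundary point $x_0$ equals $-\psi'(L/2,t_0)\,\langle-\g'(0),\nu\rangle$, which is $\le 0$ by convexity but may vanish if the minimizing geodesic is tangent to $\p M$. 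For a strictly convex boundary (the hypothesis in the paper's Theorems~\ref{thm 1.1}, \ref{thm 1.3}, \ref{thm 3.1}) this is strictly negative and the contradiction is immediate, but Theorem~\ref{thm classical} is stated under mere convexity, so one should either approximate $M$ from inside by strictly convex domains or run the standard second-order boundary refinement as in \cite{AC13}.
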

    
    For $\k>0$,  Lichnerowicz's estimate \cite{Lich58} asserts that if $M$ is closed  with $\text{Ric} \geq (m-1)\k > 0$, then $\mu_1 \ge m\k$. 
    Escobar \cite{Escobar90} proved that the same lower bound as in Lichnerowicz's result holds if $M$ is compact with a  convex boundary. 
    If $\k=0$, Zhong and Yang \cite{ZY84} proved the sharp lower bound that $\mu_1 \geq \pi^2/D^2$, by refining the gradient estimates of Li \cite{Li79} and Li and Yau \cite{LY80}. 
    For general $\k \in \R$, Theorem \ref{thm classical} was proved by Kr\"oger \cite{Kr92} using the gradient estimates method, and  by Chen and Wang \cite{CW94,CW95} using probabilistic coupling method independently.
    Recently, Andrews and Clutterbuck \cite{AC13} gave a simple proof via the modulus of continuity estimates for the solutions to the heat equation, and Zhang and the first author of this paper \cite{ZW17} gave an elliptic proof based on \cite{AC13} and \cite{Ni13}.

    On compact quaternionic K\"ahler manifolds, Li and the first author of this paper\cite{LW23} proved the following lower bounds of the first nonzero eigenvalue of the Laplacian.
    \begin{theorem}[\cite{LW23}]
    \label{thm 1.1}
    Let $(M^m, g, I, J, K)$ be a compact quaternionic K\"ahler manifold (possibly with a smooth strictly convex boundary) of quaternionic dimension $m \geq 2$ and diameter D. Suppose that the scalar curvature of $M$ is bounded from below by $16 m(m+2) \k$ for some $\k \in \R$. Let $\mu_1$ be the first nonzero eigenvalue of the Laplacian on $M$ (with Neumann boundary condition if $\partial M \neq \emptyset)$. Then
   \begin{align*}
    \mu_{1} \geq \bar{\mu}_1(m, \k, D),
    \end{align*}
    where $\bar{\mu}_1(m, \k, D)$ is the first nonzero Neumann eigenvalue of the one-dimensional eigenvalue problem
    \begin{align} \label{thm 1.2}
    \vp''-(4(m-1) T_\k+3 T_{4 \k}) \vp'=-\lambda \vp
    \end{align}
    on $[-D/2, D/2]$, where $T_{\k}(t)$ is defined in (\ref{eqn Tk}).
    \end{theorem}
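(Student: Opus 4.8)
The plan is to adapt the modulus-of-continuity method of Andrews and Clutterbuck \cite{AC13} --- or equivalently its elliptic reformulation \cite{ZW17} --- to the quaternionic K\"ahler setting. The one-dimensional operator in \eqref{thm 1.2} already signals that the only genuinely new ingredient beyond the Riemannian argument is a sharp Laplacian comparison: on $(M^{4m},g,I,J,K)$ with $\Scal \ge 16m(m+2)\k$ the distance function $r=\dist(\cdot,x_0)$ should satisfy
\begin{equation*}
\Delta r \;\le\; 4(m-1)\,T_\k(r) + 3\,T_{4\k}(r)
\end{equation*}
wherever it is smooth. Given this, the proof has two steps: (i) establish the displayed comparison; (ii) feed it into a two-point maximum principle on $M\times M$ to show that the model eigenfunction for $\bar\mu_1(m,\k,D)$ is a modulus of continuity for a first eigenfunction of $M$, whence $\mu_1\ge\bar\mu_1(m,\k,D)$.

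For step (i) I would run a Riccati comparison along a minimizing unit-speed geodesic $\gamma$ from $x_0$, using that the quaternionic structure bundle is parallel. Split the normal bundle of $\gamma$ parallelly and orthogonally as $V_1\oplus V_2$, where $V_1=\operatorname{span}\{I\gamma',J\gamma',K\gamma'\}$ has rank $3$ and $V_2$ has rank $4m-4$, and let $S$ be the shape operator of the geodesic spheres, so $S'+S^2+R_{\gamma'}=0$ with $R_{\gamma'}v=R(v,\gamma')\gamma'$. Writing $P_i$ for the ($\gamma$-parallel) orthogonal projection onto $V_i$, the traces $h_i:=\operatorname{tr}(P_iSP_i)$ satisfy $h_i'+h_i^2/\operatorname{rank}V_i+\operatorname{tr}(P_iR_{\gamma'}P_i)\le 0$, using $\operatorname{tr}(P_iS^2P_i)\ge h_i^2/\operatorname{rank}V_i$. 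Both curvature traces are pinned algebraically: $M$ is Einstein, $\Ric=\tfrac{\Scal}{4m}g$, and the quaternionic K\"ahler curvature identities force the quartic form $X\mapsto\sum_{a=1}^{3}R(X,J_aX,J_aX,X)$ to equal $\tfrac{3\Scal}{4m(m+2)}|X|^4$; hence $\operatorname{tr}(P_1R_{\gamma'}P_1)\ge 12\k$ and therefore $\operatorname{tr}(P_2R_{\gamma'}P_2)=\Ric(\gamma',\gamma')-\operatorname{tr}(P_1R_{\gamma'}P_1)\ge 4(m-1)\k$. Comparing the two scalar Riccati inequalities (with the correct asymptotics $h_i\sim \operatorname{rank}V_i/r$ as $r\to 0^+$) against the solutions $3T_{4\k}(r)$ and $4(m-1)T_\k(r)$ and adding yields $\Delta r=h_1+h_2\le 4(m-1)T_\k(r)+3T_{4\k}(r)$.

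For step (ii), let $\phi$ be a first (Neumann) eigenfunction, $\Delta\phi=-\mu_1\phi$, and let $\vp$ be a monotone solution of $\vp''-(4(m-1)T_\k+3T_{4\k})\vp'=-\mu\vp$; after an affine normalization of $\phi$ and an appropriate choice of $\mu$ and interval, one shows $C(x,y):=\phi(x)-\phi(y)-2\vp\big(\tfrac12\dist(x,y)\big)\le 0$ on $M\times M$. Were $C$ to have a positive maximum, it would be attained at an interior $(x_0,y_0)$ with $x_0\ne y_0$; the first-order conditions identify $\nabla\phi$ at $x_0,y_0$ with $\pm\vp'$ times the unit tangent of the connecting minimal geodesic, and the second-order condition, combined with $\Delta\phi=-\mu_1\phi$, the ODE for $\vp$, and the bound on $\Delta_xd+\Delta_yd$ from step (i), produces $(\Delta_x+\Delta_y)C>0$ there --- contradicting the maximum principle. (If $\p M\ne\emptyset$, strict convexity of $\p M$ together with $\p_\nu\phi=0$ excludes a boundary maximum, as in the Riemannian case.) Thus $\phi(x)-\phi(y)\le 2\vp\big(\tfrac12\dist(x,y)\big)\le 2\vp(D/2)$ for all $x,y$, and inserting this into the variational characterization of the first nonzero eigenvalue of the one-dimensional problem gives $\mu_1\ge\bar\mu_1(m,\k,D)$, exactly as in \cite{AC13,ZW17}.

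The main obstacle is the algebraic input of step (i): it is the rigidity of quaternionic K\"ahler curvature tensors --- which forces $\sum_a R(X,J_aX,J_aX,X)$ to depend only on $|X|$ and $\Scal$, even though individual quaternionic sectional curvatures are not pinched --- that lets a mere lower bound on $\Scal$ produce the $\H P^m$-type comparison with coefficients $4(m-1)T_\k+3T_{4\k}$; the hypothesis on $\Scal$ enters nowhere else. It is worth noting that no curvature-invariance of the splitting $V_1\oplus V_2$ is required: the projected Riccati argument uses only the two \emph{trace} bounds on $R_{\gamma'}$. Once the comparison is secured, step (ii) is the established modulus-of-continuity machinery and introduces no new idea beyond careful bookkeeping.
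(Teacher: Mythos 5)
Your overall framework is right --- two-point maximum principle, modulus-of-continuity viewpoint, splitting the normal bundle along $\gamma$ into the quaternionic span $V_1=\operatorname{span}\{I\gamma',J\gamma',K\gamma'\}$ and its complement $V_2$, and feeding in the two trace bounds $\operatorname{tr}(P_1R_{\gamma'}P_1)=12\kappa$ (constant quaternionic sectional curvature, Proposition~\ref{prop 2.1}) and $\operatorname{tr}(P_2R_{\gamma'}P_2)=\Ric^{\perp}(\gamma',\gamma')=4(m-1)\kappa$. This is exactly the curvature input the paper uses. However, there is a real gap in how you turn it into the sharp comparison.

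Your step~(i) is a \emph{one-point} Laplacian comparison: $\Delta r\le 4(m-1)T_\kappa(r)+3T_{4\kappa}(r)$ for $r=d(\cdot,x_0)$. In step~(ii) you then invoke ``the bound on $\Delta_x d+\Delta_y d$ from step~(i)'', i.e.\ you add the one-point comparisons at $x_0$ and $y_0$. That produces terms $T_\kappa\bigl(d(x_0,y_0)\bigr)$ and $T_{4\kappa}\bigl(d(x_0,y_0)\bigr)$, whereas the model ODE \eqref{thm 1.2} lives on $[-D/2,D/2]$ and the argument point entering $\varphi$ is $s_0=d(x_0,y_0)/2$, so what you actually need is a bound featuring $T_\kappa(s_0)$ and $T_{4\kappa}(s_0)$. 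These are not the same (and the discrepancy does not resolve favorably; for $\kappa>0$ you get a strictly worse bound, and in general you cannot recover the interval $[-D/2,D/2]$). The sharp two-point estimate does not come from summing two single-point Laplacian comparisons: it comes from the \emph{index form} along the whole minimizing geodesic, using the symmetric variation $\gamma(r,s)=\exp_{\gamma_0(s)}(r\,\eta(s)e_i(s))$ with $\eta(s)=c_{4\kappa}(s)/c_{4\kappa}(s_0)$ for the $V_1$-directions and $\zeta(s)=c_\kappa(s)/c_\kappa(s_0)$ for the $V_2$-directions. Because this variation moves \emph{both} endpoints and the interior of the geodesic simultaneously, the second variation evaluates to boundary terms like $2\eta'\eta\big|_{-s_0}^{s_0}=-4T_{4\kappa}(s_0)$ (arguments at $s_0=d/2$, not at $d$), plus a nonpositive curvature defect --- this is precisely what gives the inequalities \eqref{eqn e2}--\eqref{eqn e3} (resp.\ \eqref{3.11}--\eqref{3.12}) in the paper. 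In short: replace the single-point Riccati comparison for $\Delta r$ by the second variation of arc length with those symmetric weight functions; the curvature traces you already wrote down then slot in directly and there is no need to pass through the shape operator of geodesic spheres at all. Finally, if you keep your function $C(x,y)=\phi(x)-\phi(y)-2\varphi(d/2)$ you should also briefly argue why a positive max cannot occur on the diagonal (it is ruled out by $\varphi(0)\ge 0$, $\varphi'>0$), or else use the quotient $Q(x,y)$ as the paper does, which extends to the unit sphere bundle and is handled there as the second case.
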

    
On compact K\"ahler manifolds, Li and the first author of this paper \cite{LW21} proved lower bounds of the first nonzero eigenvalue of the Laplacian, which was generalized by the authors \cite{WZ23} to the $p$-Laplacian for $1<p\le 2$. Blacker and Seto \cite{BS19} proved a lower bound for the first nontrivial eigenvalue of the $p$-Laplacian on K\"ahler manifolds assuming positive Ricci curvature. Recently, Rutkowski and Seto \cite{RS23}  established an explicit lower bound for the first eigenvalue of the Laplacian on K\"ahler manifolds based off the comparison results of Li and the first author of this paper \cite{LW21}. It is thus a natural question to study the first nonzero eigenvalue of the $p$-Laplacian on quaternionic K\"ahler manifolds. Regarding this, we prove  
    \begin{theorem}
    \label{thm 1.3}
    Let $(M^{m},g,I,J,K)$ be a compact quaternionic K\"{a}hler manifold of quaternionic dimension $m\ge 2$ and diameter $D$, whose scalar curvature is bounded from below by $16m(m+2)\k$ for some $\k \in \R$.
    Let $\mu_{1,p}(M)$ be the first nonzero eigenvalue of the $p$-Laplacian on M (with Neumann boundary condition if $M$ has a strictly convex boundary). Assume $1<p\leq 2$, then
    \begin{align*}
    \mu_{1,p}(M) \geq \bar{\mu}_{1}(m,p,\k,D),
    \end{align*}
    where $\bar{\mu}_{1}(m,p,\k,D)$ is the first nonzero Neumann eigenvalue of the one-dimensional eigenvalue problem
    \begin{align}
    \label{1.3}
    (p-1)|\vp'|^{p-2}\vp''
    -( 4(m-1)T_{\k} +3T_{4\k} )|\vp'|^{p-2}\vp'
    = -\mu |\vp|^{p-2} \vp
    \end{align}
    on interval $[-D/2,D/2]$, where $T_{\k}(t)$ is defined in (\ref{eqn Tk}).
    \end{theorem}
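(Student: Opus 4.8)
The plan is to adapt the modulus-of-continuity approach of Andrews--Clutterbuck \cite{AC13} to the $p$-Laplacian on quaternionic K\"ahler manifolds, combining the parabolic $p$-Laplacian flow technique of \cite{WZ23} with the refined distance-comparison available on quaternionic K\"ahler manifolds from \cite{LW23}. Concretely, I would first reduce the eigenvalue estimate to a modulus-of-continuity estimate for solutions of the normalized $p$-heat flow $\p_t u = \Delta_p u$ on $M$: if $u$ is a solution and $\psi(s,t)$ is a time-dependent modulus with $u(x,t)-u(y,t)\le 2\psi\bigl(\tfrac{d(x,y)}{2},t\bigr)$ at $t=0$, and $\psi$ satisfies a suitable one-dimensional equation, then the comparison persists for all $t>0$. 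Standard arguments (eigenfunction as initial data, long-time behavior of the model one-dimensional flow) then convert this into the inequality $\mu_{1,p}(M)\ge \bar\mu_1(m,p,\k,D)$. Since the restriction $1<p\le 2$ already appears in \cite{WZ23}, I expect it enters exactly where it did there: in controlling the degeneracy/singularity of $|\vp'|^{p-2}$ and in the viscosity-solution regularization.

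The technical heart is the maximum-principle argument for the quantity $Z(x,y,t) = u(x,t)-u(y,t)-2\psi\bigl(\tfrac{d(x,y)}{2},t\bigr)$. At a first zero of $Z$ at an interior spatial maximum point $(x_0,y_0)$ with $x_0\ne y_0$, one transports an orthonormal frame along the minimizing geodesic $\g$ from $y_0$ to $x_0$ and evaluates the second variation. The new ingredient over the real Riemannian case is that on a quaternionic K\"ahler manifold one does not merely use $\Ric\ge (m-1)\k$ in the ordinary sense; instead, following \cite{LW23}, one splits the $(4m-1)$ directions orthogonal to $\g'$ (in real dimension $4m$) into the $3$ directions spanned by $I\g', J\g', K\g'$ and the remaining $4m-4$ directions, and uses that the sectional curvatures of the quaternionic planes are controlled by $4\k$ while the average of the others gives the $4(m-1)\k$ term. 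This is precisely what produces the drift coefficient $4(m-1)T_\k + 3T_{4\k}$ in \eqref{1.3} rather than the generic $(m-1)T_\k$. Feeding the parallel test vectors $\{\g', I\g', J\g', K\g', e_5,\dots\}$ appropriately rescaled into the second variation formula and combining with the elliptic term $(p-1)|\vp'|^{p-2}\vp''$ coming from the double-derivative of $\psi$ along $\g$ yields, after the quaternionic curvature bookkeeping, exactly the differential inequality that the model function $\vp$ is built to violate—hence $Z\le 0$ is preserved.

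I would organize the write-up as: (i) recall from \cite{WZ23} the viscosity-solution framework for the $p$-heat flow and the reduction of the eigenvalue bound to modulus-of-continuity preservation, stated so that only the curvature-dependent drift term needs to change; (ii) recall from \cite{LW23} the quaternionic second-variation/distance Laplacian comparison, i.e. that along a minimizing geodesic the relevant Jacobi-field estimate yields the upper bound $\Delta d \le 4(m-1)T_\k(d) + 3T_{4\k}(d)$ under the scalar curvature hypothesis $\Scal\ge 16m(m+2)\k$; (iii) run the doubling-variable maximum principle, handling the cut-locus and the $x_0=y_0$ cases by the usual barrier/contradiction arguments, and the boundary case using strict convexity as in \cite{AC13,WZ23}; (iv) solve/analyze the one-dimensional problem \eqref{1.3} on $[-D/2,D/2]$ and pass $t\to\infty$. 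The main obstacle I anticipate is purely technical rather than conceptual: making the maximum principle rigorous for the degenerate operator when $p<2$ at points where $\nabla u$ or $\vp'$ vanishes, which forces either an approximation by nondegenerate operators (regularizing $|\nabla u|^{p-2}$ to $(|\nabla u|^2+\e)^{(p-2)/2}$) together with uniform estimates, or a careful viscosity-solution formulation; I would follow whichever of these \cite{WZ23} uses so that the quaternionic curvature input is the only genuinely new computation.
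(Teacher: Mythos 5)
Your proposal follows essentially the same route as the paper: prove a modulus-of-continuity preservation theorem for a normalized $p$-Laplacian heat flow (the paper uses the $1$-homogeneous form $\p_t v = |\Delta_p v|^{-(p-2)/(p-1)}\Delta_p v$, the same one \cite{WZ23} employs), established via a doubling-of-variables maximum principle and second-variation estimates along a minimizing geodesic, with the frame split into $\gamma'$, $\{I\gamma', J\gamma', K\gamma'\}$ rescaled by $c_{4\k}$, and the remaining $4m-4$ directions rescaled by $c_\k$ — exactly producing the drift $4(m-1)T_\k + 3T_{4\k}$ from Proposition~\ref{prop 2.1}. Your identification of where $1<p\le 2$ enters (through the convexity/monotonicity of $t\mapsto |t|^{-(p-2)/(p-1)}t$ and the degenerate-operator issues handled as in \cite{WZ23}) and of the reduction from modulus-of-continuity to the eigenvalue bound also match the paper's treatment.
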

	
    Note that if $\k=0$, ODE \eqref{1.3} can be solved by using $p$-trigonometric functions, and we get an explicit lower bound.
    \begin{corollary}
    With the same assumptions as in Theorem \ref{thm 1.3}, and assume further that $\k=0$. Then
    \begin{equation}\label{1.4}
		\mu_{1,p}(M) \geq (p-1)(\frac{\pi_p}{D})^{p},
    \end{equation}
    where $\displaystyle \pi_p=\frac{2 \pi}{p \sin(\pi /p)}$.
    \end{corollary}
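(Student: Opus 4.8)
The plan is to deduce the corollary from Theorem \ref{thm 1.3} by explicitly solving the one-dimensional eigenvalue problem \eqref{1.3} in the flat case $\k=0$. When $\k=0$, one has $T_0\equiv 0$ and $T_{0}\equiv 0$, so the first-order terms drop out and \eqref{1.3} reduces to the pure $p$-Laplacian ODE
\begin{align*}
(p-1)|\vp'|^{p-2}\vp'' = -\mu|\vp|^{p-2}\vp \quad \text{on } [-D/2,D/2]
\end{align*}
with Neumann conditions $\vp'(\pm D/2)=0$. I would first recall the standard $p$-trigonometric setup: the function $\sin_p$ is defined as the solution of $(|u'|^{p-2}u')' = -(p-1)|u|^{p-2}u$ with $u(0)=0$, $u'(0)=1$, and $\pi_p = 2\pi/(p\sin(\pi/p))$ is its first positive zero, so that $\sin_p$ is odd, $\pi_p$-antiperiodic, and $\cos_p := \sin_p'$ vanishes exactly at odd multiples of $\pi_p/2$.

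Next I would exhibit the first nonzero Neumann eigenfunction. The natural candidate is $\vp(t) = \cos_p\!\big(\tfrac{\pi_p}{D}\,t\big)$ (up to scaling), which is even, satisfies $\vp'(\pm D/2) = -\tfrac{\pi_p}{D}\sin_p(\pm\pi_p/2)\cdot(\text{const}) $ — more precisely one checks $\vp'(t) = -\tfrac{\pi_p}{D}|\sin_p|^{p-2}\sin_p(\tfrac{\pi_p}{D}t)$-type expression vanishes at $t=\pm D/2$ because $\cos_p(\pm\pi_p/2)=0$ forces $\sin_p(\pm\pi_p/2)=\pm 1$; the cleaner route is to use the Pythagorean-type identity $|\sin_p|^p + |\cos_p|^p = 1$ and the differentiation rules $(\sin_p)' = \cos_p$, $(|\cos_p|^{p-2}\cos_p)' = -(p-1)|\sin_p|^{p-2}\sin_p$. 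Substituting $\vp(t)=\cos_p(\tfrac{\pi_p}{D}t)$ into the reduced ODE and using these identities, the chain rule produces the factor $(\pi_p/D)^p$, giving eigenvalue $\mu = (p-1)(\pi_p/D)^p$. One then verifies this is the \emph{first} nonzero Neumann eigenvalue: $\vp$ has exactly one sign change on $(-D/2,D/2)$ (at $t=0$), and by the standard oscillation/variational theory for the one-dimensional $p$-Laplacian the $k$-th eigenfunction has exactly $k$ interior zeros, so $\bar\mu_1(m,p,0,D) = (p-1)(\pi_p/D)^p$. Combining with Theorem \ref{thm 1.3} yields \eqref{1.4}.

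The main obstacle, such as it is, is purely bookkeeping with the $p$-trigonometric functions: verifying that $\cos_p(\tfrac{\pi_p}{D}t)$ exactly satisfies the Neumann ODE requires care with the absolute-value/signum factors $|\vp'|^{p-2}$, since $\cos_p$ and its derivative change sign on the interval, and one must confirm the differentiation identities hold across those sign changes (they do, but the computation $\tfrac{d}{dt}(|\vp'|^{p-2}\vp'')$ must be organized so no division by zero occurs at $t=0$). The other point needing a sentence of justification is that $\bar\mu_1$ in Theorem \ref{thm 1.3} is genuinely the \emph{first nonzero} Neumann eigenvalue, which is why $\vp$ must be required to change sign; I would cite the classical Sturm–Liouville-type comparison for the one-dimensional $p$-Laplacian (as used, e.g., in \cite{WZ23,BS19}) rather than reprove it. No deep input is needed beyond Theorem \ref{thm 1.3} itself.
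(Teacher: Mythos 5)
Your overall strategy---when $\k=0$ the first-order terms in \eqref{1.3} vanish, so one solves the pure one-dimensional $p$-Laplacian Neumann problem on $[-D/2,D/2]$ with $p$-trigonometric functions---is exactly what the paper intends (the paper proves this corollary by nothing more than that remark). But you have picked the wrong eigenfunction. Several things you assert about $\vp(t)=\cos_p(\pi_p t/D)$ are false: $\cos_p$ is even and satisfies $\cos_p(0)=1>0$, so $\vp$ is strictly positive on $(-D/2,D/2)$ and does \emph{not} change sign at $t=0$; $\cos_p(\pm\pi_p/2)=0$, so $\vp$ satisfies the Dirichlet, not the Neumann, condition, and for $p=2$ one has $\vp'(\pm D/2)=\mp\pi_p/D\neq 0$; the expression you write for $\vp'$, proportional to $|\sin_p|^{p-2}\sin_p(\pi_p t/D)$, equals $\pm\text{const}\neq 0$ at $t=\pm D/2$ precisely because $\sin_p(\pm\pi_p/2)=\pm 1$, so it does not vanish. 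Most seriously, $\cos_p=\sin_p'$ is not a solution of the $p$-Laplacian ODE when $p\neq 2$, so the substitution you sketch would not close.

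The correct eigenfunction is $\vp(t)=\sin_p(\omega t)$ with $\omega=\pi_p/D$. It is odd, has its one interior zero at $t=0$, and $\vp'(t)=\omega\cos_p(\omega t)$ vanishes at $t=\pm D/2$ because $\cos_p(\pm\pi_p/2)=0$, so the Neumann condition holds. The very identity you quote, $\bigl(|\cos_p|^{p-2}\cos_p\bigr)'=-(p-1)|\sin_p|^{p-2}\sin_p$, is exactly the statement that $\sin_p$ solves $(p-1)|u'|^{p-2}u''=-(p-1)|u|^{p-2}u$; applying the chain rule gives $(p-1)|\vp'|^{p-2}\vp''=-(p-1)\omega^p|\vp|^{p-2}\vp$, hence $\bar\mu_1(m,p,0,D)=(p-1)(\pi_p/D)^p$. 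With this swap your oscillation/firstness argument and the appeal to Theorem \ref{thm 1.3} are fine.
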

    
    Now let us turn to lower bounds for the Dirichlet eigenvalues of the Laplacian. For  $\k, \Lambda \in \R$, denote by $C_{\k,\Lambda}$ the unique solution of the initial value problem
    \begin{align}
	\label{1.5}
		\left \{
		\begin{aligned}
			&\phi''(t)+\kappa \phi(t)=0,\\
			&\phi(0)=1,\ \phi'(0)=-\Lambda,
		\end{aligned}
		\right.
	\end{align}
    and denote $T_{\kappa,\Lambda}$ for
	$\kappa, \Lambda \in \mathbb{R}$ by
	\begin{align}
	\label{1.6}
	    T_{\kappa,\Lambda}(t):
	    =-\frac{ C_{\kappa,\Lambda}'(t) }
	{ C_{\kappa,\Lambda}(t)}.	
	\end{align}
       Recall  the inradius of $M$ is  defined by
        \[
        R=\displaystyle \sup_{x \in M} d(x, \partial M),
        \] 
        where $d(x,\p M)$ denotes the distance function to the boundary $\p M$ given by
        \[
        d(x,\partial M)
        =\inf_{y \in \partial M} d(x,y).
        \]
    In the Riemannian setting, the following result is well-known.
    \begin{theorem}
    \label{thm 1.5}
        Let $(M^m, g)$ be a compact Riemannian manifold with smooth boundary $\p M \neq \emptyset$. Suppose that the Ricci curvature of $M$ is bounded from below by $(m-1)\k$ and the mean curvature of $\p M$ is bounded from below by $(m-1)\Lambda$ for some $\k,\Lambda \in \R$. Let $\lambda_1$ be the first Dirichlet eigenvalue of the Laplacian on $M$. Then
        \[
        \lambda_1 \geq \bar{\lambda}_1(m,\k,\Lambda,R),
        \]
        where $\bar{\lambda}_1(m,\k,\Lambda,R)$ is the first eigenvalue of the one-dimensional eigenvalue problem
        \[
	\left\{
	\begin{array}{l}
		\vp''-(m-1) T_{\k, \Lambda} \vp'=-\lambda \vp, \\
		\vp(0)=0,\ \vp'(R)=0.
	\end{array}
	\right.
	\]
        Here $R$ denotes the inradius of $M$.
    \end{theorem}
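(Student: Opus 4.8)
The plan is to establish the eigenvalue comparison via the modulus of continuity method of Andrews and Clutterbuck \cite{AC13}, adapted to the Dirichlet setting. Let $u$ be a first Dirichlet eigenfunction on $M$, normalized so that $\max_M |u| = 1$; without loss of generality $u > 0$ in the interior (the first Dirichlet eigenfunction has a sign). Let $\vp$ be the first eigenfunction of the model one-dimensional problem on $[0,R]$, which we may take to satisfy $\vp(0)=0$, $\vp'>0$ on $[0,R)$, $\vp'(R)=0$, and $\vp$ increasing; rescale $\vp$ so that $\max \vp = 1 = \max u$. The claim is that the bound $\lambda_1 \ge \bar\lambda_1$ follows once we show that $d(x,\p M)$ transports the function $\vp$ to a supersolution barrier for $u$, i.e.\ that for all $x \in M$,
\begin{align*}
u(x) \le \vp\bigl(d(x,\p M)\bigr).
\end{align*}
Indeed, if this pointwise comparison holds together with a matching of the eigenvalue parameters it forces $\lambda_1 \ge \bar\lambda_1$; the comparison itself is proved dynamically, by running the heat flows associated to $\Delta$ on $M$ and to the model operator $L\vp = \vp'' - (m-1)T_{\k,\Lambda}\vp'$ on $[0,R]$ and showing the inequality is preserved.

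\textbf{Key steps.} First I would set up the heat flow $\p_t v = \Delta v$ on $M$ with Dirichlet boundary condition and initial data $v(\cdot,0)=u$; since $u$ is an eigenfunction, $v(\cdot,t) = e^{-\lambda_1 t} u$. On the model side, let $\psi(s,t)$ solve $\p_t \psi = \psi_{ss} - (m-1)T_{\k,\Lambda}(s)\psi_s$ on $[0,R]$ with $\psi(0,t)=0$, $\psi_s(R,t)=0$, and initial data $\psi(\cdot,0)$ chosen to be a nondecreasing function dominating the ``profile'' of $u$ measured against distance to the boundary. The heart of the argument is the differential inequality: using the Laplacian comparison theorem, under $\Ric \ge (m-1)\k$ and mean curvature of $\p M$ bounded below by $(m-1)\Lambda$, the distance-to-boundary function satisfies $\Delta d(x,\p M) \le (m-1)T_{\k,\Lambda}\bigl(d(x,\p M)\bigr)$ in the barrier sense on the interior (away from the cut locus of $\p M$, with the usual support-function argument handling the cut locus). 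Then one checks that $w(x,t) := \psi\bigl(d(x,\p M),t\bigr)$ is a supersolution of the heat equation on $M$: $\p_t w - \Delta w = \psi_t - \psi_{ss}|\nabla d|^2 - \psi_s \Delta d = \psi_t - \psi_{ss} - \psi_s \Delta d \ge \psi_t - \psi_{ss} + (m-1)T_{\k,\Lambda}\psi_s = 0$, using $|\nabla d| = 1$ and $\psi_s \ge 0$ together with the comparison on $\Delta d$. The boundary conditions are compatible: $w = 0$ on $\p M$ matches the Dirichlet condition, so the maximum principle gives $v(x,t) \le w(x,t)$ for all $t \ge 0$ provided it holds at $t=0$, which we arrange by the choice of $\psi(\cdot,0)$.

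\textbf{Conclusion and main obstacle.} Once $v(x,t) \le w(x,t)$ is established for all time, feed in $v(x,t) = e^{-\lambda_1 t}u(x)$ and expand $\psi(\cdot,t)$ in model eigenfunctions; the lowest mode $e^{-\bar\lambda_1 t}\vp(s)$ dominates the large-$t$ asymptotics, and comparing exponential decay rates at a point where $u$ is positive yields $e^{-\lambda_1 t}u(x) \le C e^{-\bar\lambda_1 t}\vp(d(x,\p M)) + (\text{faster-decaying terms})$, which forces $\lambda_1 \ge \bar\lambda_1$. (Alternatively one runs the standard elliptic version: show directly that $\vp(d(x,\p M))$ is a supersolution of $\Delta \vp + \bar\lambda_1 \vp = 0$ type and apply the comparison principle for the eigenvalue problem, as in \cite{ZW17}.) The main obstacle is the regularity of $d(\cdot,\p M)$: it is only Lipschitz globally and the Laplacian comparison $\Delta d \le (m-1)T_{\k,\Lambda}(d)$ holds classically only away from the cut locus $\mathrm{Cut}(\p M)$. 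This is handled by the Calabi trick — at a point on the cut locus one replaces $d(\cdot,\p M)$ by a smooth lower barrier obtained by pushing the base point slightly into the interior along a minimizing geodesic to $\p M$, which still satisfies the required differential inequality up to an arbitrarily small error and suffices for the maximum principle argument. A secondary technical point is ensuring the initial comparison $u \le \psi(\cdot,0)$ can be arranged with $\psi(\cdot,0)$ nondecreasing and vanishing at $0$; this follows by taking $\psi(s,0)$ slightly larger than $\sup\{u(x): d(x,\p M) = s\}$ and smoothing, using that $u$ vanishes on $\p M$ and is bounded.
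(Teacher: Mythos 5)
The paper does not actually prove Theorem \ref{thm 1.5}; it quotes it as known, citing Li--Yau for $\k=\Lambda=0$ and Kasue in general. The closest in-paper argument is the proof of Theorem \ref{thm m2}, which treats the $p$-Laplacian/quaternionic analogue by the purely elliptic route: show that $v=\vp(d(\cdot,\p M))$ satisfies $\Delta_p v\le-\bar\lambda_1|v|^{p-2}v$ pointwise away from the cut locus and hence distributionally, then invoke Barta's inequality. Your main route is instead the parabolic barrier comparison, with the elliptic route relegated to a parenthesis. Both work; the elliptic version is shorter because Barta's inequality replaces, in one stroke, your construction of admissible initial data, the long-time asymptotics, and the spectral expansion of the model solution.

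On the substance of your parabolic argument, two corrections. First, the Laplacian comparison you display has the wrong sign: under $\Ric\ge(m-1)\k$ and mean curvature of $\p M$ bounded below by $(m-1)\Lambda$, the correct statement is $\Delta d(\cdot,\p M)\le-(m-1)\,T_{\k,\Lambda}\bigl(d(\cdot,\p M)\bigr)$ (test on the round ball: $d=R-|x|$ gives $\Delta d=-(m-1)/(R-d)=-(m-1)T_{0,1/R}(d)$). Your subsequent supersolution computation silently uses this correct sign, so nothing downstream breaks, but the inequality as written is false. Second, the opening claim that the normalized pointwise bound $u\le\vp(d(\cdot,\p M))$ ``forces'' $\lambda_1\ge\bar\lambda_1$ is not right: a static comparison of two normalized functions carries no eigenvalue information by itself; the eigenvalue inequality comes only from the decay-rate comparison (or from Barta). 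Relatedly, the normalization $\max u=\max\vp$ is unnecessary and unachievable in general --- all you need is $u\le C\vp(d(\cdot,\p M))$ for some large $C$, which holds because $\vp'(0)>0$ and $|\nabla u|$ is bounded on $\overline M$. Taking $\psi(\cdot,0)=C\vp$ then makes the model solution exactly $Ce^{-\bar\lambda_1 t}\vp(s)$, eliminating both the profile-smoothing step and the eigenfunction expansion at the end. Your treatment of the cut locus is in the right spirit; the paper handles the same point by passing to the distributional inequality via \cite[Lemma 5.2]{LW21-b}.
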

    Regarding Theorem \ref{thm 1.5}, the special case $\k=\Lambda=0$ is due to Li and Yau \cite{LY80} and the general case is obtained by Kasue \cite{Kas84}.

    In \cite{LW23}, Li and the first author of this paper obtained an analogous theorem in the quaternionic K\"ahler setting.
    \begin{theorem}[\cite{LW23}]
        Let $(M^m, g, I,J,K)$ be a compact quaternionic K\"{a}hler manifold with smooth  boundary $\partial M$ and of quaternionic dimension $m\geq 2$.
		Suppose that the scalar curvature is bounded from below by $16m(m+2)\kappa$ for some $\kappa \in \R$, and the second fundamental form on $\partial M$ is bounded from below by $\Lambda \in \R$.
		Let $\lambda_{1}$ be the first Dirichlet eigenvalue of the Laplacian on $M$. Then
		\begin{align}
		\lambda_{1} \geq \bar{\lambda}_1(m,\k,\Lambda, R),
		\end{align}
		where $\bar{\lambda}_1(m,\k,\Lambda, R)$ is the first eigenvalue of the one-dimensional eigenvalue problem
        \[
        \begin{cases}
        \vp''-(4(m-1)T_{\k,\Lambda}+3T_{4\k,\Lambda})\vp'=-\bar{\lambda}_1\vp,\\
        \vp(0)=0,\vp'(R)=0,
        \end{cases}
        \]
        where $R$ denotes the inradius of $M$.
    \end{theorem}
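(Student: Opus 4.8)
The plan is to reduce the estimate to the one-dimensional model via a Laplacian comparison for the distance to the boundary, and then to run a barrier (one-point modulus of continuity) argument for the heat equation, in the spirit of \cite{LY80, Kas84, AC13}; the quaternionic refinement parallels the closed case in \cite{LW23}.

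\emph{Step 1: boundary Laplacian comparison.} Let $\rho(x)=d(x,\p M)$, which is smooth off $\mathrm{Cut}(\p M)$ and there realized by a unique unit-speed geodesic $\g$ meeting $\p M$ orthogonally at $\g(0)$ with $\g(\rho(x))=x$. Split the parallel normal bundle of $\g'$ along $\g$ into the rank-three quaternionic distribution $\mathrm{span}\{I\g',J\g',K\g'\}$ and its $4(m-1)$-dimensional orthogonal complement. On a quaternionic K\"ahler manifold with $\Scal\ge16m(m+2)\k$ one has $\Sect(\g',I\g')=\Sect(\g',J\g')=\Sect(\g',K\g')=\tfrac{\Scal}{4m(m+2)}\ge4\k$, while a partial-trace estimate on the curvature tensor yields effective curvature $\ge\k$ in each of the remaining directions. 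Inserting these into the matrix Riccati comparison for the shape operator of the level sets $\{\rho=c\}$, with initial datum at $\g(0)$ controlled by the second-fundamental-form bound $\Lambda$, one obtains (as in \cite{LW23})
\[
\D\rho\ \le\ -\bigl(4(m-1)\,T_{\k,\Lambda}(\rho)+3\,T_{4\k,\Lambda}(\rho)\bigr),
\]
pointwise off $\mathrm{Cut}(\p M)$ and in the barrier/distributional sense on all of $M$; here $T_{\cdot,\Lambda}$ is as in \eqref{1.6}, with $T_{\k,\Lambda}(0)=T_{4\k,\Lambda}(0)=\Lambda$.

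\emph{Step 2: one-dimensional model, barrier, and conclusion.} Put $A(s):=-\bigl(4(m-1)T_{\k,\Lambda}(s)+3T_{4\k,\Lambda}(s)\bigr)$ on $[0,R]$; the geometric hypotheses force $C_{\k,\Lambda}$ and $C_{4\k,\Lambda}$ to be positive there, so $A$ is smooth on $[0,R]$. Let $\vp$ be the first eigenfunction of $\vp''+A\vp'=-\bar\l_1\vp$ on $[0,R]$ with $\vp(0)=0$, $\vp'(R)=0$; by Sturm--Liouville theory $\vp>0$ on $(0,R]$, $\vp'\ge0$ on $[0,R]$ and $\vp'(0)>0$. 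For $f(x,t):=e^{-\bar\l_1t}\vp(\rho(x))$, a direct computation using the ODE, Step 1 and $\vp'\ge0$ gives $(\p_t-\D)f=e^{-\bar\l_1t}\vp'(\rho)\bigl(A(\rho)-\D\rho\bigr)\ge0$ in the barrier sense, while $f|_{\p M}=0$. Now let $\phi_1>0$ be a first Dirichlet eigenfunction of $M$, $-\D\phi_1=\l_1\phi_1$, rescaled so that $\phi_1\le\vp\circ\rho$ on $\ov M$ --- possible since both vanish to first order along $\p M$ (with $\phi_1\sim|\p_\nu\phi_1|\,\rho$ and $\vp\circ\rho\sim\vp'(0)\,\rho$ near $\p M$) while $\vp\circ\rho>0$ in the interior. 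Then $u(x,t):=e^{-\l_1t}\phi_1(x)$ solves $(\p_t-\D)u=0$, $u|_{\p M}=0$, $u(\cdot,0)\le f(\cdot,0)$, so the parabolic comparison principle applied to $u-f$ (valid across $\mathrm{Cut}(\p M)$ in the barrier sense) gives $u\le f$ for all $t\ge0$, i.e.
\[
e^{-\l_1t}\,\phi_1(x)\ \le\ e^{-\bar\l_1t}\,\vp(\rho(x))\qquad(t\ge0,\ x\in M).
\]
Fixing an interior point with $\phi_1(x)>0$ and letting $t\to\infty$ forces $\l_1\ge\bar\l_1=\bar\l_1(m,\k,\Lambda,R)$.

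\emph{Main difficulty.} The technical core is Step 1: getting the sharp coefficients $4(m-1)$ and $3$ requires the quaternionic K\"ahler curvature identities along the normal geodesic --- so that $I\g',J\g',K\g'$ genuinely produce the $T_{4\k,\Lambda}$ term and the complementary $4(m-1)$ directions the $T_{\k,\Lambda}$ term --- together with a careful account of how the boundary bound $\Lambda$ enters the Riccati initial condition on each subframe. The analytic points (non-smoothness of $\rho$ on $\mathrm{Cut}(\p M)$, justifying the differential inequalities and the comparison principle there, the rescaling of $\phi_1$ near $\p M$) are handled by the standard support-function and Hopf-lemma arguments. The corresponding Dirichlet estimate for the $p$-Laplacian follows from the same scheme, with the linear heat flow replaced by the $p$-heat flow and the trigonometric model by the $p$-trigonometric one, as in \cite{WZ23}.
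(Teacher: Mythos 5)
Your proposal is correct and reaches the stated lower bound, but Step~2 takes a genuinely different route from the paper's own argument. The paper proves this statement as the $p=2$ case of its Theorem \ref{thm m2}: it defines the trial function $v=\vp\circ\rho$ with $\rho=d(\cdot,\p M)$, uses the comparison $\Delta v\le \bar{\mathcal F}\vp(\rho)=-\bar\l_1 v$ (from \cite[Theorem 5.7]{LW23}, distributional across the cut locus), and then concludes $\l_1\ge\bar\l_1$ directly from Barta's inequality, since $v>0$ in $M$ and $v=0$ on $\p M$. Your argument instead promotes $v$ to a time-dependent barrier $f=e^{-\bar\l_1 t}\vp\circ\rho$, shows it is a heat-equation supersolution, rescales $\phi_1$ via Hopf's lemma so $\phi_1\le\vp\circ\rho$, applies the parabolic maximum principle, and extracts $\l_1\ge\bar\l_1$ from the decay rates as $t\to\infty$. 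Both rely on the identical Laplacian comparison in Step~1; Barta trades your parabolic comparison, the Hopf-lemma rescaling, and the $t\to\infty$ limit for a single elliptic inequality and is correspondingly shorter, while your route is closer in spirit to Andrews--Clutterbuck and extends more naturally to parabolic statements. Either way the heart of the matter is the sharp boundary Laplacian comparison, which both you and the paper import from \cite{LW23}.

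One small inaccuracy in Step~1 worth flagging: you assert $\Sect(\g',I\g')=\Sect(\g',J\g')=\Sect(\g',K\g')=\Scal/\big(4m(m+2)\big)$, but on a general quaternionic K\"ahler manifold only the sum of the three is pinned --- Proposition \ref{prop 2.1} gives $Q(X)=12\k|X|^2$, i.e.\ $\sum_{i=2}^{4}R(e_1,e_i,e_1,e_i)=12\k$, not equality in each direction. Consequently a full matrix Riccati comparison on the quaternionic block is not available; what works (and is what \cite{LW23} and the second-variation computations in the paper's Section~3 actually use) is a blocked-trace comparison: bound the trace of the shape operator over the $3$-dimensional quaternionic block using the curvature sum $\ge 12\k$, and separately over the $4(m-1)$-dimensional complement using $\Ric^{\perp}\ge 4(m-1)\k$. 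This still yields exactly the coefficients $3T_{4\k,\Lambda}$ and $4(m-1)T_{\k,\Lambda}$ in your inequality for $\Delta\rho$, so the conclusion is unaffected, but the statement about individual sectional curvatures should be corrected to a statement about the block traces.
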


    Our second main result is the following $p$-Laplace analogue of the lower bounds for the first Dirichlet eigenvalue on quaternionic K\"ahler manifolds.
    
    \begin{theorem}
    \label{thm m2}
		Let $(M^m, g, I,J,K)$ be a compact quaternionic K\"{a}hler manifold with smooth  boundary $\partial M$ and of quaternionic dimension $m\ge2$.
		Suppose that the scalar curvature is bounded from below by $16m(m+2)\kappa$ for some $\kappa \in \R$, and the second fundamental form on $\partial M$ is bounded from below by $\Lambda \in \R$.
		Let $\lambda_{1,p}(M)$ be the first Dirichlet eigenvalue of the $p$-Laplacian on $M$.
		Assume $1<p<\infty$, then
		\begin{align}\label{1.7}
		\lambda_{1,p}(M) \geq \bar{\lambda}_1(m,p,\k,\Lambda, R),
		\end{align}
		where $\bar{\lambda}_1(m,p,\k,\Lambda, R)$ is the first eigenvalue of the one-dimensional eigenvalue problem
        \begin{align}
        \label{1.8}
		(p-1)|\vp'|^{p-2}\vp''
		- \big(4(m-1)T_{\k,\Lambda}
		+3T_{4\k,\Lambda}\big)|\vp'|^{p-2} \vp' 
        = -\l|\varphi|^{p-2} \varphi	
        \end{align}
        on $[0, R]$ with boundary conditions $\varphi(0)=0$ and $\varphi'(R)=0$, and $R$ denotes the inradius of $M$.
    \end{theorem}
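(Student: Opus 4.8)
The plan is to follow the modulus-of-continuity / coupling strategy of Andrews--Clutterbuck \cite{AC13} as adapted to the $p$-Laplacian and to the quaternionic K\"ahler setting, combining the techniques used to prove Theorem \ref{thm 1.3} (the Neumann case) with the Dirichlet boundary analysis underlying Theorem \ref{thm 1.5}. Let $u$ be a first Dirichlet eigenfunction of the $p$-Laplacian on $M$, normalized so that $\max_M u = 1 = -\min_M u$ or, in the Dirichlet case, normalized by $\max_M u = 1$ with $u > 0$ in the interior. Let $\vp$ be the first eigenfunction of the one-dimensional model problem \eqref{1.8} on $[0,R]$, which we may take to be positive on $(0,R]$ with $\vp(0)=0$, $\vp'>0$, and $\vp'(R)=0$; we rescale so that $\max u \le \max \vp$. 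The goal is to show that $\vp \circ d(\cdot, \p M)$ (or more precisely a function of the distance to $\p M$) serves as a barrier forcing $\lambda_{1,p}(M) \ge \bar\lambda_1$; equivalently, one runs the parabolic $p$-heat flow $u_t = \Delta_p u$ with the eigenfunction as initial data and shows that the ``distance-to-boundary modulus of continuity'' $\vp$ is preserved, so that the decay rate of $u$ is at least that of $\vp$.

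The key steps, in order, are: (i) Establish the Bochner-type / Laplacian comparison in the quaternionic K\"ahler setting for the function $r(x) = d(x,\p M)$: using the lower scalar curvature bound $16m(m+2)\k$ and the lower bound $\Lambda$ on the second fundamental form, show that wherever $r$ is smooth, $\Delta r \le 4(m-1)T_{\k,\Lambda}(r) + 3T_{4\k,\Lambda}(r)$, which is the quaternionic K\"ahler refinement (the ``$4(m-1) + 3$'' splitting reflecting the quaternionic structure, exactly as in \cite{LW23}) of the classical Heintze--Karcher/Kasue estimate. This is the geometric heart of the argument and uses the decomposition of the curvature of a quaternionic K\"ahler manifold into the $\mathbb{H}P^m$-model part plus a scalar factor. (ii) Set up the comparison: define $w(x,t)$ via the flow and define $Z(x,y,t) = u(x,t) - u(y,t) - 2\vp\!\left(\frac{?}{}\right)$-type quantity adapted to the one-sided Dirichlet situation, i.e. compare $u(x,t)$ against $\vp(r(x))$ directly rather than a two-point function; show $\inf(\vp(r(x)) - u(x,t)) \ge 0$ is preserved by a maximum principle argument, handling the boundary term using $\vp(0)=0 = u|_{\p M}$ and $\vp'(R)=0$ at the cut locus / interior maximum of $r$. (iii) At an interior first violation point, use the eigenvalue equation for $\vp$, the comparison $\Delta r \le \cdots$ from step (i), concavity/monotonicity properties of $\vp$ (so that the sign of $\vp'$ makes the $|\vp'|^{p-2}$ coefficients cooperate — this is where $1<p<\infty$ rather than $p\le 2$ is needed, because one does not here need the subtle two-point degeneracy estimates that forced $p \le 2$ in the Neumann Theorem \ref{thm 1.3}), and derive a contradiction with the assumed strict inequality, exactly paralleling \cite{ZW17} and \cite{WZ23}. (iv) Conclude that $u \le \vp(r(\cdot))$ for all time along the normalized flow, which yields $\lambda_{1,p}(M) \ge \bar\lambda_1(m,p,\k,\Lambda,R)$; equivalently run an elliptic version directly with the eigenfunction.

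The main obstacle I anticipate is step (i) combined with the regularity issues of $r(x) = d(x,\p M)$: one must control $\Delta r$ in the barrier sense across the cut locus of $\p M$ (where $r$ is only Lipschitz), which is handled by the standard trick of working with $r_\eps$ or applying Calabi's upper-barrier argument, but in the quaternionic K\"ahler case one also has to verify that the curvature terms enter with the correct coefficients — i.e. that the index-form computation along a minimizing geodesic from $\p M$ genuinely produces $4(m-1)T_{\k,\Lambda} + 3T_{4\k,\Lambda}$ and not merely $(4m-1)$ times a single comparison function. This requires choosing a parallel orthonormal frame adapted to the quaternionic structure ($e_1, Ie_1, Je_1, Ke_1, \dots$) along the normal geodesic and separately estimating the contribution of the three ``quaternionic'' directions $Ie_1, Je_1, Ke_1$ (which see curvature $\ge 4\k$) against the remaining $4(m-1)$ directions (which see curvature $\ge \k$), together with a Riccati comparison incorporating the initial condition from $\Lambda$. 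A secondary technical point is justifying the parabolic approximation argument for the $p$-Laplace heat flow (degenerate/singular when $p \ne 2$), for which one regularizes the equation as in \cite{WZ23} and passes to the limit; once the geometric comparison of step (i) is in hand, the maximum-principle bookkeeping in steps (ii)--(iv) is routine and mirrors the cited references.
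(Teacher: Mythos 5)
Your proposal takes a genuinely different route from the paper's. The paper's proof is a short, purely elliptic argument: take the positive, increasing first eigenfunction $\vp$ of the one-dimensional problem \eqref{1.8}, form the trial function $v(x) = \vp(d(x,\p M))$, invoke the $p$-Laplacian comparison from \cite[Theorem 5.7]{LW23} (stated in the paper as Lemma \ref{lm Dirichlet eigenvalue}, obtained by the choices $\alpha=(p-1)|\nabla\varphi|^{p-2}$, $\beta=|\nabla\varphi|^{p-2}$) to get $\Delta_p v \le \bar{\mathcal F}\varphi(d(\cdot,\p M)) = -\bar\lambda_1 |v|^{p-2}v$ in the barrier/distributional sense across the cut locus, and then conclude immediately by Barta's inequality \cite[Theorem 3.1]{LW21-b}. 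You instead propose running the (degenerate) $p$-heat flow and preserving a distance-to-boundary modulus of continuity via a parabolic maximum principle, mentioning only in passing that one could ``equivalently run an elliptic version directly with the eigenfunction.''

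Both approaches hinge on the same geometric input, which you correctly identify as the heart of the matter: the index-form computation along normal geodesics from $\p M$ in an $\{e_1, Ie_1, Je_1, Ke_1, \dots\}$ frame, producing the split $4(m-1)T_{\k,\Lambda} + 3T_{4\k,\Lambda}$ together with the Riccati initial condition from $\Lambda$. What the paper's Barta route buys is that, once this comparison lemma is in hand, the eigenvalue bound follows in a few lines with no flow, no time-regularization of the singular/degenerate $p$-heat equation, and no bookkeeping at a ``first violation time'': the distributional interpretation across the cut locus is handled once via \cite[Lemma 5.2]{LW21-b}, and the full range $1<p<\infty$ is immediate. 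Your parabolic version should also work in principle, but it front-loads substantial technical overhead (regularization of $\Delta_p$, passage to the limit, boundary behaviour of the flow) that the problem does not require; moreover, your step (ii) wavers between a two-point quantity $Z(x,y,t)$ and a one-point barrier before settling on the latter, and never names Barta's inequality, which is the device that closes the elliptic argument. To align with the paper you would replace steps (ii)--(iv) entirely by: choose $\vp>0$ on $(0,R]$ with $\vp'>0$ on $[0,R)$, set $v=\vp\circ d(\cdot,\p M)$, apply the comparison lemma to obtain $\Delta_p v \le -\bar\lambda_1 |v|^{p-2}v$ distributionally, and invoke Barta.
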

    The proof of Theorem \ref{thm m2} relies on a comparison theorem for the second derivatives of distance function to the boundary proved in \cite[Theorem 5.7]{LW23} and Barta's inequality. 

    The rest of the paper is organized as follows.
    In Section 2, we review some basic properties of quaternionic K\"ahler manifolds. 
    Section 3 and Section 4 are devoted to proving Theorem \ref{thm 1.3} and Theorem \ref{thm m2} respectively.
    In Section 5, we give an alternative proof of Theorem \ref{thm 1.1}.
    
    Throughout the paper, we denote the function $T_{\k}$ for $\k \in \R$ by
    \begin{align} \label{eqn Tk}
    T_{\k}(t) =\begin{cases}
        \sqrt{\k} \tan (\sqrt{\k}t), & \k >0,\\
		0, & \k =0,\\
		-\sqrt{-\k}	 \tanh(\sqrt{-\k}t), & \k <0,
    \end{cases}
    \end{align}
    and the function $c_{\k}$ is defined for $\k \in \R$ by
    \begin{align}
    \label{ck}
      c_{\k}(t) =
    \begin{cases}
        \cos (\sqrt{\k}t), & \k >0,\\
		1, & \k =0,\\
		\cosh(\sqrt{-\k}t), & \k <0.
    \end{cases}  
    \end{align}
    
\section{Quaternionic K\"ahler manifolds}

    In this section, we recall some basic properties about quaternionic K\"ahler manifolds. These properties are due to Berger \cite{Berger66} and Ishihara \cite{Ish74}. We shall follow the presentation in Kong, Li and Zhou \cite{KLZ08}.

    \begin{definition}
    A quaternionic K\"ahler manifold $(M^m, g)$ of quaternionic dimension $m$ (the real dimension is $4m$) is a Riemannian manifold with a rank three vector bundle $V \subset \text{End}(TM)$ satisfying the following properties.
    \begin{itemize}
        \item[(1)] In any coordinate neighborhood $U$ of $M$, there exists a local basis $\big \{I,J,K\big \}$ of $V$ such that
        \begin{align*}
            & I^2=J^2=K^2=-1,\\
            & IJ=-JI=K,\\
            & JK=-KJ=I,\\
            & KI=-IK=J,
        \end{align*}
        and for all $X,Y \in TM$
        \[
        \langle X,Y \rangle
        =\langle IX,IY \rangle
        =\langle JX,JY \rangle
        =\langle KX,KY \rangle.
        \]
        \item[(2)] If $\phi\in \Gamma(V)$, then $\nabla_X \phi \in \Gamma(V)$ for all $X\in TM$.
    \end{itemize}
    \end{definition}

    
    
     The Riemannian curvature tensor of $(M, g)$ is defined by
    \[
    R(X,Y,Z,W)
    =\langle 
    -\nabla_X\nabla_YZ
    +\nabla_Y\nabla_X Z
    +\nabla_{[X,Y]}Z
    ,W \rangle.
    \] In this paper, we are mostly concerned with the curvature properties of quaternionic K\"ahler manifolds.
    As in \cite{KLZ08} and \cite{BY22}, we define
    \begin{definition}
        Let $(M^m, g)$ be a quaternionic K\"ahler manifold.
        \begin{itemize}
            \item[(1)] The quaternionic sectional curvature of $M$ is defined as
            \[
            Q(X):=\frac{R(X,IX,X,IX)+R(X,JX,X,JX)+R(X,KX,X,KX)}{|X|^2}.
            \]
            \item[(2)] The orthogonal Ricci curvature of $M$ is defined as
            \[
            \operatorname{Ric}^{\perp}(X,X):=\operatorname{Ric}(X,X)-Q(X).
            \]
        \end{itemize}
    \end{definition}

    First of all, all quaternionic K\"ahler manifolds of quaternionic dimension $m \geq 2$ are Einstein (cf. \cite[Theorem 1.2]{KLZ08}), namely there exists a constant $\k$ such that 
    \[
    \text{Ric}=4(m+2)\k.
    \]
    Moreover, we have the following proposition.

   \begin{proposition}
   \label{prop 2.1}
    Let $(M^m, g)$ be a quaternionic K\"ahler manifold of quaternionic dimension $m \geq 2$ with $\mathrm{Ric}=4(m+2) \k$ for $\k \in \R$. Then
   $M$ has constant quaternionic sectional curvature 
        \[
        Q(X)=12 \kappa|X|^2,
        \]
    and constant orthogonal Ricci curvature
        \[
        \operatorname{Ric}^{\perp}(X, X)=4(m-1) \k|X|^2 .
        \]

   \end{proposition}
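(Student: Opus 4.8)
The plan is to exploit the algebraic structure of the curvature tensor of a quaternionic K\"ahler manifold, together with the local almost-complex structures $I,J,K$. Fix a unit vector $X$ and extend it to a local orthonormal frame adapted to the quaternionic structure; in particular $\{X, IX, JX, KX\}$ is orthonormal by the compatibility of $g$ with $I,J,K$. The orthogonal Ricci statement will then follow immediately from the quaternionic sectional curvature statement, since by definition
\[
\operatorname{Ric}^{\perp}(X,X)=\operatorname{Ric}(X,X)-Q(X)=4(m+2)\k|X|^2-12\k|X|^2=4(m-1)\k|X|^2,
\]
using that $M$ is Einstein with $\operatorname{Ric}=4(m+2)\k$. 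So the whole content is the identity $Q(X)=12\k|X|^2$.

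To prove $Q(X)=12\k|X|^2$, I would first establish that the quantity $Q(X)/|X|^2$ is independent of $X$, i.e. that $M$ has \emph{constant} quaternionic sectional curvature. The standard route (following Berger--Ishihara, as presented in \cite{KLZ08}) is to use property (2) of the definition: since $\nabla_Y\phi\in\Gamma(V)$ for any section $\phi$ of $V$, one can differentiate the relations $I^2=-1$, $IJ=K$, etc., to obtain structure equations for the connection forms on $V$, and then the second Bianchi identity forces the "pointwise" curvature invariant $Q$ to be constant. Alternatively, one invokes the known classification that the curvature tensor of a quaternionic K\"ahler manifold decomposes as a multiple of the curvature tensor of $\mathbb{H}P^m$ plus a quaternionic-Weyl part, and the quaternionic-Weyl part contributes zero to the combination $R(X,IX,X,IX)+R(X,JX,X,JX)+R(X,KX,X,KX)$ by its trace-free properties. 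Either way, this reduces the computation to evaluating $Q$ on the model space.

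Next I would compute the normalization constant by testing on the symmetric model space $\mathbb{H}P^m$ (or, more directly, by taking a trace identity). Concretely: on the model, or by the algebraic identities relating $R$ and the $I,J,K$ parallel structures, one has $R(X,IX,X,IX)=R(X,JX,X,JX)=R(X,KX,X,KX)=c|X|^4$ for a universal constant $c$ depending only on the Einstein constant $\k$. Tracing the Einstein equation $\operatorname{Ric}(X,X)=4(m+2)\k|X|^2$ over a quaternionic-adapted frame, and using the first Bianchi identity together with the relations $IJ=K$ to express the "off-diagonal" curvature terms, pins down $c=4\k$, hence $Q(X)=3c|X|^2=12\k|X|^2$. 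The bookkeeping here — keeping track of which curvature components vanish, and correctly using the symmetries $R(X,IX,Y,IY)=R(X,IY,Y,IX)+\dots$ that come from parallelism of $I$ — is the only delicate part.

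The main obstacle I expect is precisely this last step: justifying rigorously that the cross terms in the trace of the Ricci tensor organize into exactly $3Q(X)$ worth of curvature plus a remainder that is controlled, and getting the constant right. In a paper of this type, though, Proposition \ref{prop 2.1} is a known structural fact rather than a new result, so I would most likely not reprove it from scratch; instead I would cite \cite[Theorem 1.2]{KLZ08} (or the relevant statement in \cite{BY22}) for the constancy of $Q$ and for the value $12\k$, and include only the one-line deduction of the orthogonal Ricci formula from the Einstein condition. If a self-contained argument is desired, the cleanest implementation is the curvature-decomposition approach: write $R=\k\, R_{\mathbb{H}P^m}+W$ with $W$ in the quaternionic-Weyl module, note $R_{\mathbb{H}P^m}$ gives $Q=12|X|^2$ by a direct computation on the symmetric space, and check $W$ contributes $0$ to $Q(X)$ using that $W$ is trace-free with respect to the relevant partial traces.
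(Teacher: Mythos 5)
Your proposal takes essentially the same approach as the paper: the paper's proof is the single line ``See Theorem 1.3 of \cite{KLZ08},'' and you likewise conclude that the right move is to cite \cite{KLZ08} for the constancy of $Q$ and the value $12\kappa$, with the orthogonal Ricci formula following from the Einstein condition by the arithmetic $\operatorname{Ric}^{\perp}(X,X)=4(m+2)\kappa|X|^2-12\kappa|X|^2=4(m-1)\kappa|X|^2$. One small correction: the relevant result in \cite{KLZ08} is Theorem 1.3, not Theorem 1.2 (the latter is the Einstein statement); the extra sketch you give of the Berger--Ishihara/curvature-decomposition argument is a reasonable outline of what underlies that cited theorem but is not needed here.
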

   \begin{proof}
      See Theorem 1.3 of \cite{KLZ08}.
   \end{proof}
    We end this section with the following useful lemma.
    \begin{lemma}
    Let $(M^m, g)$ be a quaternionic K\"ahler manifold of quaternionic dimension $m \ge 2$ with $\operatorname{Ric}=4(m+2) \k$ for $\k \in \R$. Let $\gamma:[a, b] \rightarrow M$ be a geodesic with unit speed and $X_I(t), X_J(t), X_K(t)$ are parallel vector fields along $\gamma$ such that $X_I(a)=I \gamma'(a), X_J(a)=J \gamma'(a), X_K(a)=K \gamma'(a)$. Then
    \[
    R(\gamma', X_I(t), \gamma', X_I(t))
    +R(\gamma', X_J(t), \gamma', X_J(t))
    +R(\gamma', X_K(t), \gamma', X_K(t))
    =12 \k
    \]
    for all $t \in[a, b]$.
    \end{lemma}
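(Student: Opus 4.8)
The plan is to reduce everything to the previous lemma by choosing the parallel frame wisely and then just reading off the quaternionic sectional curvature identity. First I would exploit the quaternionic structure along the geodesic: although $I,J,K$ are only locally defined, the endomorphisms $I_{\gamma'},J_{\gamma'},K_{\gamma'}$ applied to the unit tangent $\gamma'(a)$ give three unit vectors $I\gamma'(a),J\gamma'(a),K\gamma'(a)$ that, together with $\gamma'(a)$, span a quaternionic line in $T_{\gamma(a)}M$. Parallel-transporting each of these along $\gamma$ produces parallel vector fields $X_I(t),X_J(t),X_K(t)$. Since parallel transport is an isometry and commutes with nothing in particular, the key observation is that the \emph{sum} in the statement is a parallel-transport-invariant expression: at each fixed $t$, the three vectors $\gamma'(t),X_I(t),X_J(t),X_K(t)$ form an orthonormal quaternionic-type quadruple because $X_I(a),X_J(a),X_K(a)$ are mutually orthogonal and orthogonal to $\gamma'(a)$, and parallel transport preserves all inner products.

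Second I would invoke Proposition \ref{prop 2.1}, which says $M$ has constant quaternionic sectional curvature $Q(X)=12\k|X|^2$. The subtlety is that the definition of $Q(X)$ uses $IX,JX,KX$ for the \emph{local} almost-complex structures evaluated at the point, not arbitrary orthonormal triples. So the real content of the lemma is that the quantity $R(\gamma',X_I,\gamma',X_I)+R(\gamma',X_J,\gamma',X_J)+R(\gamma',X_K,\gamma',X_K)$ agrees with $Q(\gamma'(t))$ for \emph{every} $t$, even though $X_I(t),X_J(t),X_K(t)$ need not equal $I_{\gamma(t)}\gamma'(t),J_{\gamma(t)}\gamma'(t),K_{\gamma(t)}\gamma'(t)$. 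To see this, recall property (2) in the definition of a quaternionic K\"ahler manifold: $V$ is parallel as a subbundle, so the parallel transport of any element of $V_{\gamma(a)}$ lies in $V_{\gamma(t)}$. Hence $I_{\gamma(a)}$ parallel-transported is some section of $V$ along $\gamma$, say $\phi_I(t)\in V_{\gamma(t)}$, and similarly $\phi_J,\phi_K$; these satisfy the quaternion relations pointwise (parallel transport is an algebra isometry on $V$ since the defining relations $I^2=-1$ etc.\ are parallel). Moreover $X_I(t)=\phi_I(t)\gamma'(t)$ because both sides are parallel and agree at $t=a$ (here one uses that $\gamma'$ is parallel and $\phi_I$ is the parallel transport of $I$, so $\phi_I\gamma'$ is parallel).

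Third, with $\{\phi_I(t),\phi_J(t),\phi_K(t)\}$ a local quaternionic basis of $V_{\gamma(t)}$ (or at least a basis obtained from one by an $SO(3)$ rotation, which is exactly the ambiguity allowed in the bundle $V$), the expression $R(X,\phi_IX,X,\phi_IX)+R(X,\phi_JX,X,\phi_JX)+R(X,\phi_KX,X,\phi_KX)$ is independent of the choice of such a basis — this is a standard fact about the $\so(3)$-invariance of the quaternionic sectional curvature, and it is implicit in the statement of Proposition \ref{prop 2.1} since $Q(X)$ is well-defined despite $I,J,K$ being defined only up to rotation. Therefore this sum equals $Q(\gamma'(t))=12\k|\gamma'(t)|^2=12\k$, since $\gamma$ has unit speed. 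Putting the three steps together gives the claimed identity for all $t\in[a,b]$.

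The main obstacle, and the step deserving the most care, is the middle one: justifying that parallel transport intertwines the \emph{fiberwise} almost-complex structures with the parallel vector fields $X_I,X_J,X_K$, i.e.\ that $X_I(t)=\phi_I(t)\gamma'(t)$ with $\phi_I(t)\in V_{\gamma(t)}$. This uses property (2) of the definition (the bundle $V$ is $\nabla$-parallel) in an essential way, together with the compatibility of $\nabla$ with the metric and the quaternion relations. Once that is in place, the curvature identity is just the statement that $Q$ is constant, which is Proposition \ref{prop 2.1}. I would therefore write the proof as: (i) set $\phi_I,\phi_J,\phi_K$ to be the parallel transports of $I,J,K$ along $\gamma$ and check they form a quaternionic basis of $V$ at each point; (ii) check $X_\bullet(t)=\phi_\bullet(t)\gamma'(t)$ by a uniqueness-of-parallel-fields argument; (iii) conclude via $SO(3)$-invariance of $Q$ and Proposition \ref{prop 2.1}.
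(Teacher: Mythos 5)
Your proof is correct. The paper itself does not give an argument for this lemma --- it simply cites Lemma 1.5 of \cite{KLZ08} --- so there is no in-paper proof to compare against, but your reconstruction identifies exactly the right mechanism. The crux is, as you say, step (ii): $X_I(t)=\phi_I(t)\gamma'(t)$ where $\phi_I$ is the parallel transport of $I$ along $\gamma$. This follows because both sides are parallel (the product rule gives $\nabla_{\gamma'}(\phi_I\gamma')=(\nabla_{\gamma'}\phi_I)\gamma'+\phi_I(\nabla_{\gamma'}\gamma')=0$, using that $\gamma$ is a geodesic) and they agree at $t=a$. Property (2) of the definition guarantees $\phi_I(t)\in V_{\gamma(t)}$, and the Leibniz rule for endomorphism composition together with the fact that $\mathrm{Id}$ is parallel shows the quaternion relations propagate, so $\{\phi_I,\phi_J,\phi_K\}$ remains an admissible local basis of $V$. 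The well-definedness of $Q(X)$ under the $\mathrm{SO}(3)$ gauge freedom in the choice of $\{I,J,K\}$ then gives
\[
\sum_{\alpha\in\{I,J,K\}} R(\gamma',X_\alpha,\gamma',X_\alpha)
= \sum_{\alpha\in\{I,J,K\}} R(\gamma',\phi_\alpha\gamma',\gamma',\phi_\alpha\gamma')
= Q(\gamma'(t)) = 12\kappa,
\]
using Proposition \ref{prop 2.1} and $|\gamma'|=1$. This is complete; the only stylistic suggestion is to state explicitly (rather than leave implicit) that the $\mathrm{SO}(3)$-invariance of $Q$ is what makes the definition of quaternionic sectional curvature basis-independent, since that is the point being leaned on.
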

    \begin{proof}
        See Lemma 1.5 of \cite{KLZ08}.
    \end{proof}

\section{Modulus of Continuity Estimates}
In this section, we prove modulus of continuity estimates for solutions of a  fully nonlinear parabolic equation \eqref{thm 3.1} on a compact quaternionic K\"ahler manifold, which is the key step in the proof of Theorem \ref{thm 1.3}. 
    
    \begin{theorem}
    \label{thm 3.1}
    Let $(M^m, g, I,J,K)$ be a compact quaternionic K\"ahler manifold (possibly with non-empty  strictly convex boundary) of quaternionic dimension $m\ge2$ and diameter $D$, whose scalar curvature is bounded from below by $16m(m+2)\k$ for some $\k \in \mathbb{R}$.
    Suppose that $T\in(0,\infty]$, $1<p\leq 2$ and $v\in C^{2,1}(M \times [0, T))$ is a  solution of
    \begin{align}
	\label{3.1}
		\p_t v(x,t)
		=|\Delta_p v(x,t)|^{-\frac{p-2}{p-1}}\Delta_p v(x,t)
    \end{align}
    with Neumann boundary condition if $\p M\neq \emptyset$.
    Suppose $\varphi(s,t) \in C^{2,1}\big([0,D/2] \times [0,T)\big)$ satisfies the following properties
    \begin{itemize}
        \item[(1)] 
        $v(y,0)-v(x,0) \leq 2 \varphi(\frac{d(x,y)}{2},0), \  x,y \in M$;
		\item[(2)] 
        $0 \geq \varphi_t \geq 
        |\mathcal{F}\varphi|^{-\frac{p-2}{p-1}} \mathcal{F}\varphi,\	
        (s,t) \in [0,D/2]\times [0,T)$;
		\item[(3)] 
        $\varphi'(s,t)>0,\ (s,t) \in [0,D/2]\times [0,T)$;
		\item[(4)] $\varphi(0,t) \geq 0,\ t \in[0,T)$.
    \end{itemize}
    Then 
    \begin{align}
    \label{3.2}
		v(y,t)-v(x,t) \leq 2 \varphi(\frac{d(x,y)}{2},t)
    \end{align}
    for $(x,y, t)\in M\times M\times [0, T)$.
    Here one-dimensional operator $\mathcal{F}$ is defined by
    \begin{align*}
        \mathcal{F}\varphi
        :=(p-1)|\varphi'|^{p-2}\varphi''
        -( 4(m-1)T_{\k} +3T_{4\k} )|\varphi'|^{p-2}\varphi',	
    \end{align*}
    and  we denote  $\frac{\p^k \vp}{\p s^k}$ by $\vp^{(k)}$ for short throughout the paper.	
    \end{theorem}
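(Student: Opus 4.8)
The plan is to run a maximum-principle / doubling-variables argument following Andrews--Clutterbuck, adapted to the fully nonlinear flow \eqref{3.1} and to the quaternionic K\"ahler curvature data. Define, for $\varepsilon>0$, the quantity
\begin{align*}
Z_{\varepsilon}(x,y,t) := v(y,t)-v(x,t) - 2\varphi\Big(\frac{d(x,y)}{2},t\Big) - \varepsilon e^{t} - \varepsilon
\end{align*}
on $M\times M\times[0,T)$. By hypothesis (1) and $\varphi(0,t)\ge 0$, $Z_\varepsilon<0$ at $t=0$ (and when $x=y$), and we want to show $Z_\varepsilon<0$ for all time; letting $\varepsilon\to0$ then gives \eqref{3.2}. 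Suppose not: let $t_0$ be the first time $Z_\varepsilon=0$, attained at some pair $(x_0,y_0)$ with $x_0\ne y_0$, and (by the Neumann condition together with strict convexity of $\partial M$, exactly as in \cite{AC13} and \cite{LW23}) with $x_0,y_0$ interior points so that $x_0,y_0$ are joined by a minimizing geodesic $\gamma$ of length $d_0=d(x_0,y_0)$ along which $d$ is smooth. At this first contact point one has the first-order conditions $\nabla_x Z_\varepsilon = \nabla_y Z_\varepsilon = 0$, the spatial second-order condition that the Hessian of $Z_\varepsilon$ in $(x,y)$ is negative semidefinite, and the time condition $\partial_t Z_\varepsilon \ge 0$.

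The gradient conditions force $\nabla v(x_0)$ and $\nabla v(y_0)$ to both be parallel to $\gamma'$ with $|\nabla v(x_0)|=|\nabla v(y_0)|=\varphi'(s_0,t_0)$ where $s_0=d_0/2>0$ (here hypothesis (3), $\varphi'>0$, is used to get a genuine direction and to keep the $p$-Laplacian nondegenerate). The key step is to feed a well-chosen family of test directions into the negative-semidefiniteness of the spatial Hessian of $Z_\varepsilon$. One uses $\gamma'$ itself together with the three distinguished parallel fields $X_I,X_J,X_K$ along $\gamma$ (from the Lemma at the end of Section 2) and an orthonormal parallel frame of the remaining $4m-4$ directions; transporting these to $x_0$ and $y_0$ and comparing $\nabla^2 d$ with the model, the curvature terms assemble exactly into $4(m-1)T_\kappa + 3T_{4\kappa}$ after using the second-variation/Riccati comparison as in \cite{LW23}, while the $\varphi$-part of $Z_\varepsilon$ contributes its second derivative. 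Summing these inequalities and plugging into the evaluation of $|\Delta_p v|^{-\frac{p-2}{p-1}}\Delta_p v$ at $x_0$ and $y_0$ — this is where the restriction $1<p\le 2$ matters, because one needs the map $\xi\mapsto |\xi|^{-\frac{p-2}{p-1}}\xi$ (equivalently, $t\mapsto|t|^{-\frac{p-2}{p-1}}t$ applied to $\Delta_p$) to be monotone and to interact correctly with the trace bound, so that the nonlinear comparison between the $p$-Laplacian of $v$ and the one-dimensional operator $\mathcal F\varphi$ goes through — yields
\begin{align*}
\partial_t Z_\varepsilon(x_0,y_0,t_0) \le -\,|\mathcal F\varphi|^{-\frac{p-2}{p-1}}\mathcal F\varphi\Big|_{(s_0,t_0)} + \varphi_t(s_0,t_0)\cdot 2\cdot\tfrac12 - \varepsilon e^{t_0} \le -\varepsilon e^{t_0} < 0,
\end{align*}
where hypothesis (2) is exactly what makes the first two terms combine to something $\le 0$. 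This contradicts $\partial_t Z_\varepsilon\ge0$, completing the argument.

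The main technical obstacle — and the place I would spend the most care — is the second-order spatial computation: correctly differentiating $d(x,y)$ twice in the full $8m$-dimensional product variable, choosing the test variations so that (a) the Hessian-of-$v$ terms reproduce $\Delta_p v(x_0)$ and $\Delta_p v(y_0)$ up to the degenerate-direction subtleties of the $p$-Laplacian, and (b) the Hessian-of-$d$ terms are controlled from below using the quaternionic K\"ahler curvature identities of Proposition \ref{prop 2.1} and the Lemma, producing precisely the coefficient $4(m-1)T_\kappa+3T_{4\kappa}$ rather than the naive $(4m-1)T_\kappa$. The decomposition of the $4m-1$ normal directions along $\gamma$ into the three ``quaternionic'' directions $I\gamma',J\gamma',K\gamma'$ (which see curvature summing to $12\kappa$, giving the $3T_{4\kappa}$ term) and the remaining $4(m-1)$ directions (each comparable to the $T_\kappa$ model via $\mathrm{Ric}^\perp=4(m-1)\kappa$) is the crux, and is handled exactly as in \cite{LW23}; the $p$-Laplacian merely weights these by $|\varphi'|^{p-2}$, which is legitimate since $\varphi'>0$. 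A secondary point requiring attention is the boundary case, ruling out a first contact point with $x_0$ or $y_0$ on $\partial M$, which uses the Neumann condition on $v$ and the strict convexity of $\partial M$ together with hypothesis (4); this is standard and follows \cite{AC13,LW23} verbatim.
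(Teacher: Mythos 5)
Your proposal follows essentially the same approach as the paper: doubling variables with the perturbed function, ruling out boundary contact via the Neumann condition and strict convexity of $\partial M$, second variation of arc length along the three quaternionic directions $I\gamma',J\gamma',K\gamma'$ (with multiplier $c_{4\kappa}$) and the remaining $4(m-1)$ orthogonal directions (with multiplier $c_\kappa$) to produce the coefficient $4(m-1)T_\kappa+3T_{4\kappa}$, and finally using that $t\mapsto|t|^{-\frac{p-2}{p-1}}t$ is odd, increasing, and convex on $t>0$ when $1<p\le2$ in order to pass the linear comparison $\Delta_p v(y_0)-\Delta_p v(x_0)\le 2\mathcal F\varphi$ through the nonlinearity. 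One correction to your final display: the signs and factors of two are scrambled — the correct chain at the contact point is $0\le \partial_t Z_\varepsilon \le 2|\mathcal F\varphi|^{-\frac{p-2}{p-1}}\mathcal F\varphi - 2\varphi_t - \varepsilon e^{t_0}$, which rearranges to $\varphi_t < |\mathcal F\varphi|^{-\frac{p-2}{p-1}}\mathcal F\varphi$ and contradicts hypothesis~(2); as written, your bound $-|\mathcal F\varphi|^{-\frac{p-2}{p-1}}\mathcal F\varphi + \varphi_t\le 0$ is actually the opposite of what (2) gives.
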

	
    \begin{proof}
        The proof proceeds as in the K\"ahler case in \cite{WZ23} but slightly more involved.
		Let $\varepsilon >0$, and
		\begin{align*}
		C_{\varepsilon}(x,y,t)
		:=v(y,t)-v(x,t)-2\varphi(\frac{d(x,y)}{2},t)
		-\varepsilon e^t.
		\end{align*}
		To prove \eqref{3.2}, it suffices to show that $C_{\varepsilon}(x,y,t)<0$ for all $\varepsilon>0$.
    
        We argue by contradiction and assume that there exists  $(x_0,y_0,t_0)$ such that $C_{\varepsilon}$ attains its maximum zero on
		$M \times M \times [0,t_0]$ at $(x_0,y_0,t_0)$.
		Clearly $x_0 \neq y_0$, and $t_0 >0$. 
        If $\p M\neq \emptyset$, 
        similarly as in \cite[Theorem 3.1]{LW21},
		the strictly convexity of $\p M$, the Neumann condition and the positivity of $\varphi'$ rule out the possibility that $x_0 \in \partial M$ and $y_0 \in \partial M$.
		
		Compactness of $M$ implies that there exists an arc-length minimizing geodesic $\gamma_0$ connecting $x_0$ and $y_0$ such that $\gamma_0(-s_0)=x_0$ and $\gamma_0(s_0)=y_0$ with $s_0=d(x_0,y_0)/2$.
		We pick an orthonormal basis $ \{ e_i \}_{i=1}^{4m}$ for $T_{x_0}M$ with
        \[
        e_1 = \gamma'_0(-s_0),\ 
        e_2=I\gamma_0'(-s_0),\ 
        e_3=J\gamma_0'(-s_0),\ 
        e_4=K\gamma_0'(-s_0),
        \]
		and parallel transport it along $\gamma_0$ to produce an orthonormal basis $ \{ e_i(s) \}_{i=1}^{4m}$ for $T_{\gamma_0(s)} M$ with $e_1(s)=\gamma_0'(s)$ for each $s\in [-s_0, s_0]$.  
        
To calculate the derivatives,   we recall the first and second variation formulas of arc length.
        Suppose
		$\gamma(r, s) :
		[-\delta,\delta]\times [-s_0,s_0] \rightarrow M$ is a smooth variation of $\gamma_0$ such that $\gamma(0,s)=\gamma_0(s)$, then the variation formulas give
		\begin{align}
        \label{3.3}
		\frac{ \mathrm{d} }{ \mathrm{d} r} \big|_{r=0}
		L[\gamma(r,s)]
		=
		g( T,\gamma_r ) \big|_{-s_0}^{s_0},
		\end{align}
		and
		\begin{align}
        \label{3.4}
		\frac{ \mathrm{d}^2}{ \mathrm{d} r^2} \big|_{r=0}
		L[\gamma(r,s)]
		=
		\int_{-s_0}^{s_0}
		\big( | (\nabla_s \gamma_r)^{\perp} |^2-R(T,\gamma_r,T,\gamma_r) \big) \ \mathrm{d} s
		+g( T,\nabla_r \gamma_r )\big|_{-s_0}^{s_0},
		\end{align}
		where $T$ is the unit tangent vector to $\gamma_0$.
        
        First derivative inequality  yields
		\begin{align}
			\label{3.5}
			0 \leq \partial_t
			C_{\varepsilon}(x_0,y_0,t_0)
			=v_t (y_0,t_0)-v_t(x_0,t_0)
			-2\varphi_t(s_0, t_0) - \varepsilon e^{t_0},
		\end{align}
		and
		\begin{align*}
			\nabla v(x_0,t_0)=\varphi'(s_0,t_0)e_1(-s_0),      \quad
			\nabla v(y_0,t_0)=\varphi'(s_0,t_0)e_1(s_0).
		\end{align*}
		Then we get
		\begin{align}
        \label{3.6}			
            &\Delta_p v(y_0,t_0)-\Delta_p v(x_0,t_0)\\
		  =& (p-1) |\varphi'|^{p-2} 
        \big(v_{11}(y_0,t_0)-v_{11}(x_0,t_0) \big)
        +|\varphi'|^{p-2} 
        \sum_{i=2}^{4m}
	\big(
        v_{ii}(y_0,t_0)-v_{ii}(x_0,t_0) \big).\nonumber
		\end{align}
		To calculate the second derivative along $e_1$, we consider the variation $\gamma(r,s):=\gamma_0(s+r\frac{s}{s_0})$, then the formulas \eqref{3.3} and \eqref{3.4} give
        \[
        \frac{\mathrm{d}}{\mathrm{d} r} \big|_{r=0} L[\gamma(r)]=2,\quad \frac{\mathrm{d}^2}{\mathrm{d} r^2}
		\big|_{r=0} L[\gamma(r)] =0.
        \]
		Hence the second derivative test for this variation produces
		\begin{align}
		\label{3.9} 
        v_{11}(y_0,t_0)-v_{11}(x_0,t_0)-2 \varphi''(s_0,t_0) \leq 0.
		\end{align}
		
		To calculate the second derivative along $e_i$ ($i=2,3,4$), we consider the variation 
        \begin{align*}
        \gamma(r,s):=\operatorname{exp}_{\gamma_0(s)}
        ( r\eta(s)e_i(s) ),\ i=2,3,4,
        \end{align*}
        where $\eta(s)=\frac{c_{4\kappa}(s)}{c_{4\kappa}(s_0)}$ and $c_{\k}(s)$ is defined in (\ref{ck}). Clearly formulas \eqref{3.3} and \eqref{3.4} imply
        \[
        \frac{\mathrm{d}}{\mathrm{d} r} \bigg|_{r=0} L[\gamma(r)]=0,
        \]		
        and
		\[
		\frac{\mathrm{d}^2}{\mathrm{d} r^2} \bigg|_{r=0} L[\gamma(r)]
		= \int_{-s_0}^{s_0}
		\bigg( (\eta')^2 -\eta^2 R(e_1,e_i,e_1,e_i) \bigg) \
		\mathrm{d} s.
		\]
		So this variation produces
        \begin{align*}
        v_{ii}(y_0,t_0)-v_{ii}(x_0,t_0)
		-\varphi'(s_0,t_0) \int_{-s_0}^{s_0}
		\bigg( (\eta')^2 -\eta^2 R(e_1,e_i,e_1,e_i) \bigg) \ \mathrm{d} s \leq 0.
        \end{align*}
        Summing over $2\le i\le 4$, we get
		\begin{align*}
        &\sum_{i=2}^4
        \big(
        v_{ii}(y_0,t_0)-v_{ii}(x_0,t_0)
        \big) \\
		\leq 
        &
        \vp' 
        \int_{-s_0}^{s_0}
		\bigg( 3(\eta')^2 -\eta^2 \sum_{i=2}^4 R(e_1,e_i,e_1,e_i) \bigg)
		\ \mathrm{d} s\\
		=& 3\vp' \eta'\eta |_{-s_0}^{s_0}
		-\vp'\int_{-s_0}^{s_0}
		\eta^2 
        \big(
        \sum_{i=2}^4
        R(e_1,e_i,e_1,e_i)-12\kappa
        \big)
		\ \mathrm{d} s\\
   	\le&  -6 T_{4\kappa}(s_0)\vp',
		\end{align*}
        where we have used the assumption that $\vp'(s,t)>0$ and $\sum_{i=2}^4 R(e_1,e_i,e_1,e_i)\ge 12 \kappa$, in the view of Proposition \ref{prop 2.1}.
        Therefore we have
		\begin{equation}
		  \label{3.11}
		  \sum_{i=2}^4 
            \big(
            v_{ii}(y_0,t_0)-v_{ii}(x_0,t_0)
            \big)
            \leq
		  -6 T_{4\kappa}(s_0) \varphi'(s_0,t_0).
		\end{equation}
        		
        To calculate the second derivative along $e_i
        $ ($5\le i\le 4m$), we  consider the variation
        \[
        \gamma(r,s): = \operatorname{exp}_{\gamma_0(s)}( r\zeta(s)e_i(s) ),
        \]		
        where $\zeta(s)=\frac{c_{\kappa}(s)}{c_{\kappa}(s_0)}$.
		Similarly, the second variation formula gives
		\[
		v_{ii}(y_0,t_0)-v_{ii}(x_0,t_0)
		\leq
        \varphi'(s_0,t_0)
		\int_{-s_0}^{s_0}
		\bigg( (\zeta')^2 -\zeta^2 R(e_1,e_i,e_1,e_i) \bigg) \ \mathrm{d} s.
		\]
		Summing over $5\le i\le 4m$, we obtain
        \begin{align*}
        &\sum_{i=5}^{4m}
		\big(v_{ii}(y_0,t_0)-v_{ii}(x_0,t_0)\big)\\
        \leq & 
        -8(m-1) T_{\kappa}(s_0) \vp'
	  -\vp'\int_{-s_0}^{s_0}
	  \zeta^2
	  \big(
	  \sum_{i=5}^{4m}
        R(e_1,e_i,e_1,e_i)-4(m-1)\kappa
	  \big)
	  \mathrm{d} s\\
        \le &  -8(m-1) T_{\kappa}(s_0)\vp',
    \end{align*}
    where in the last inequality we have used 
    \[
    \sum_{i=5}^{4m}R(e_1,e_i,e_1,e_i) = \operatorname{Ric}^{\perp} (e_1,e_1) \ge 4(m-1)\kappa,
    \]
    in the view of Proposition \ref{prop 2.1}.
		Thus we get
		\begin{equation}
			\label{3.12}
			\sum_{i=5}^{4m}
			\big(v_{ii}(y_0,t_0)-v_{ii}(x_0,t_0)\big)
			\leq
			-8(m-1) T_{\kappa}(s_0) \varphi'(s_0,t_0).
		\end{equation}
		Plugging inequalities \eqref{3.9}, \eqref{3.11} and \eqref{3.12} into equation \eqref{3.6}, we have
		\begin{equation}
			\label{3.13}
			\Delta_p v(y_0,t_0)-\Delta_p v(x_0,t_0)
			\leq
			2\mathcal{F} \varphi(s_0,t_0).
		\end{equation}
		Let $h(t):=|t|^{-\frac{p-2}{p-1}}t$, which is odd, increasing and convex for all $t>0$. We rewrite inequality  \eqref{3.13} as
		\[
		h\big(\Delta_p v(y_0,t_0)\big)
		\leq
		h\big(\Delta_p v(x_0,t_0) +2\mathcal{F} \varphi\big).
		\]
		Applying Lemma 2.1 of  \cite{LW21-c} with
		$t=\Delta_p v(x_0,t_0)$ and
		$\delta= -\mathcal{F} \varphi\geq0$, we get
		\[
		h\big(
        \Delta_p v(x_0,t_0) +2\mathcal{F} \varphi
        \big)
		\leq
		h
        \big(\Delta_p v(x_0,t_0))+2h(\mathcal{F} \varphi
        \big),
		\]
		that is
		\begin{equation}
			\label{3.14}
			\frac{1}{2}\Big(
			|\Delta_p v|^{-\frac{p-2}{p-1}}\Delta_p v    \big|_{(y_0,t_0)}
			-|\Delta_p v|^{-\frac{p-2}{p-1}}\Delta_p v \big|_{(x_0,t_0)}
			\Big)
			\leq  |\mathcal{F}\varphi|^{-\frac{p-2}{p-1}}
			\mathcal{F}\varphi.
		\end{equation}
		Combining \eqref{3.5} and \eqref{3.14}, we get
		\[
		\varphi_t \leq
		|\mathcal{F}\varphi|^{-\frac{p-2}{p-1}}\mathcal{F}\varphi
		-\frac{\varepsilon}{2} e^{t_0}
		< 
        |\mathcal{F}\varphi|^{-\frac{p-2}{p-1}}\mathcal{F}\varphi,
		\]
		which contradicts the inequality in assumption (2). Therefore
        $$C_\varepsilon(x,y,t)<0$$
        holds true for $(x,y, t)\in M\times M\times [0,T)$. Hence we complete the proof of Theorem \ref{thm 3.1}.
	\end{proof}	

        As in the K\"ahler case, the modulus of continuity estimates imply lower bounds for the first nonzero eigenvalue of the $p$-Laplacian on a quaternionic K\"ahler manifold. We restate Theorem \ref{thm 1.3} here. 

        \begin{theorem}
        Let $(M^{m},g,I,J,K)$ be a compact quaternionic K\"{a}hler manifold of quaternionic dimension $m\ge2$ and diameter $D$, whose scalar curvature is bounded from below by $16m(m+2)\k$ for some $\k \in \R$.
        Let $\mu_{1,p}(M)$ be the first nonzero eigenvalue of the $p$-Laplacian on M (with Neumann boundary condition if $M$ has a strictly convex boundary). Assume $1<p\leq 2$, then
	\begin{align*}
		\mu_{1,p}(M) \geq \bar{\mu}_{1}(m,p,\kappa,D),
	\end{align*}
        where $\bar{\mu}_{1}(m,p,\kappa,D)$ is the first nonzero Neumann eigenvalue of the one-dimensional eigenvalue problem
	\begin{align}
	\label{3.15}
		(p-1)|\varphi'|^{p-2}\varphi''
		-( 4(m-1)T_{\k} +3T_{4\k} )|\varphi'|^{p-2}\varphi'
		= -\mu |\varphi|^{p-2} \varphi
	\end{align}
	on interval $[-D/2,D/2]$.
	\end{theorem}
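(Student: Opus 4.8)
The plan is to deduce the eigenvalue lower bound from the parabolic modulus of continuity estimate in Theorem~\ref{thm 3.1}, exactly as one does in the Laplacian case (Andrews--Clutterbuck) and in the $p$-Laplacian K\"ahler case \cite{WZ23}. First I would let $u$ be a first eigenfunction of the $p$-Laplacian on $M$ with eigenvalue $\mu=\mu_{1,p}(M)$, so that $-\Delta_p u=\mu|u|^{p-2}u$ (with the Neumann condition if $\partial M\neq\emptyset$); by scaling we may assume $\max u=1$ and (after translating, using that eigenfunctions have mean-value--type balancing) that $-1\le \min u$, or more precisely that $u$ takes values in an interval where a suitable one-dimensional comparison function is monotone. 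The key point is that $w(x,t):=e^{-\beta t}u(x)$, with $\beta$ chosen so that $e^{-\beta t}$ absorbs the eigenvalue, solves the normalized parabolic flow \eqref{3.1}: since $\Delta_p(e^{-\beta t}u)=e^{-\beta(p-1)t}\Delta_p u=-\mu e^{-\beta(p-1)t}|u|^{p-2}u$, one computes $|\Delta_p w|^{-\frac{p-2}{p-1}}\Delta_p w = -\mu^{1/(p-1)} e^{-\beta t}|u|^{\frac{p-2}{p-1}}\cdot|u|^{(p-2)(1-\frac{1}{p-1})}u\cdots$; choosing $\beta=\mu^{1/(p-1)}$ (up to a constant depending on $p$) makes $\partial_t w = -\beta w = |\Delta_p w|^{-\frac{p-2}{p-1}}\Delta_p w$ hold, so Theorem~\ref{thm 3.1} applies to $v=w$.

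Next I would choose the test function $\varphi$. Let $\bar\mu=\bar\mu_1(m,p,\kappa,D)$ be the first nonzero Neumann eigenvalue of \eqref{3.15} on $[-D/2,D/2]$, and let $\psi$ be a corresponding eigenfunction, which by the standard ODE/Sturm theory can be taken odd, with $\psi'>0$ on $(-D/2,D/2)$ (hence $\psi'>0$ on the half-open interval used in Theorem~\ref{thm 3.1}), and normalized so that $\psi\ge u_{\max}$-type bounds hold; concretely one rescales $u$ so that the range of $u$ is contained in $[\psi(-D/2),\psi(D/2)]$, which is possible precisely because $\operatorname{osc} u$ is finite and $D=\operatorname{diam} M$. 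Set $\varphi(s,t):=e^{-\beta t}\psi(s)$ with $\beta=\mu^{1/(p-1)}$ (same constant as above). Then: property (3) $\varphi'>0$ holds; property (4) $\varphi(0,t)=0\ge 0$ holds since $\psi$ is odd; property (1), the initial oscillation bound $u(y)-u(x)\le 2\psi(d(x,y)/2)$, holds after the normalization because $\psi$ is increasing, odd, and dominates the oscillation of $u$ on scale $D/2$ (this is where $\bar\mu$ being the eigenvalue of exactly \eqref{3.15} on $[-D/2,D/2]$ is used — the comparison function is ``sharp''); property (2) is the crucial one: since $\psi$ solves $\mathcal F\psi=-\bar\mu|\psi|^{p-2}\psi$, one has $|\mathcal F\varphi|^{-\frac{p-2}{p-1}}\mathcal F\varphi = e^{-\beta t}\,\bar\mu^{1/(p-1)}\cdot(-\operatorname{sgn}\psi)|\psi|\cdots$, and $\varphi_t=-\beta e^{-\beta t}\psi$, so the inequality $0\ge\varphi_t\ge|\mathcal F\varphi|^{-\frac{p-2}{p-1}}\mathcal F\varphi$ reduces (using $\psi>0$ where it matters, and oddness) to $\beta\le\bar\mu^{1/(p-1)}$, i.e. $\mu\le\bar\mu$ would be the \emph{obstruction}; we want the reverse. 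So actually the argument runs by contradiction: \textbf{assume} $\mu_{1,p}(M)<\bar\mu$, choose $\beta=\mu^{1/(p-1)}<\bar\mu^{1/(p-1)}$, verify (1)--(4), apply Theorem~\ref{thm 3.1} to get $w(y,t)-w(x,t)\le 2\varphi(d(x,y)/2,t)$ for all $t$, i.e. $e^{-\beta t}(u(y)-u(x))\le 2e^{-\beta t}\psi(d(x,y)/2)$, which is just the time-independent statement $\operatorname{osc}$-bound and gives nothing new — so instead one should compare growth rates.

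The cleaner route, which I would actually take, is: run the flow $v$ with initial data $u$ (not $e^{-\beta t}u$), i.e. note $v(x,t)=e^{-\beta t}u(x)$ with $\beta=\mu^{1/(p-1)}$ solves \eqref{3.1}, and take as comparison $\varphi(s,t)=e^{-\bar\beta t}\psi(s)$ with $\bar\beta=\bar\mu^{1/(p-1)}$; then property (2) holds as an equality by construction (this is why $\psi$ must solve exactly \eqref{3.15}), properties (3),(4) hold by the Sturm normalization of $\psi$, and property (1) holds after scaling $u$ so that $\operatorname{osc} u$ fits under $2\psi(\cdot/2)$. Theorem~\ref{thm 3.1} then yields $e^{-\beta t}(u(y)-u(x))\le 2e^{-\bar\beta t}\psi(d(x,y)/2)$ for all $t\ge 0$; picking $x,y$ with $u(y)-u(x)>0$ (e.g. near the max and min of $u$) and letting $t\to\infty$ forces $\beta\ge\bar\beta$, i.e. $\mu_{1,p}(M)\ge\bar\mu_1(m,p,\kappa,D)$. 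The main obstacle, and the only genuinely delicate point, is the normalization of $u$ and the verification of property (1): one must argue, using $\operatorname{diam} M=D$ and the precise shape of $\psi$ (odd, increasing, with the ``correct'' boundary behavior at $\pm D/2$ coming from the Neumann ODE), that after multiplying $u$ by a suitable positive constant the inequality $u(y)-u(x)\le 2\psi(d(x,y)/2)$ holds for \emph{all} pairs $x,y$; this is standard but requires care about whether the relevant interval of values of $u$ lies within a monotonicity interval of $\psi$, and about the edge case $d(x,y)=D$. Everything else — the verification that $e^{-\beta t}u$ solves the normalized flow, and that (2)--(4) hold for $e^{-\bar\beta t}\psi$ — is a direct computation from the definitions of $\mathcal F$, $\Delta_p$, and $h(t)=|t|^{-\frac{p-2}{p-1}}t$.
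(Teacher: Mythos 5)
Your final construction (third paragraph) is the argument the paper is implicitly invoking when it defers to the K\"ahler case in \cite{WZ23}: let $u$ be a first eigenfunction with eigenvalue $\mu=\mu_{1,p}(M)$, set $v(x,t)=e^{-\mu^{1/(p-1)}t}u(x)$, which solves the normalized flow \eqref{3.1}, compare against $\varphi(s,t)=e^{-\bar\mu^{1/(p-1)}t}\psi(s)$ where $\psi$ is an odd first Neumann eigenfunction of \eqref{3.15}, verify (1)--(4), apply Theorem \ref{thm 3.1}, and let $t\to\infty$ to force $\mu^{1/(p-1)}\ge\bar\mu^{1/(p-1)}$. This is the Andrews--Clutterbuck route and it is correct; your first two paragraphs are a detour that you rightly abandon, and the self-correction lands in the right place.

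One point you gloss over and actually misstate: you write that $\psi'>0$ on the \emph{half-open} interval used in Theorem \ref{thm 3.1}, but condition (3) of Theorem \ref{thm 3.1} as stated requires $\varphi'>0$ on the \emph{closed} interval $[0,D/2]$, whereas the Neumann eigenfunction has $\psi'(\pm D/2)=0$. This is not fatal, but it is a genuine step that needs to be handled by the standard perturbation: work on $[-D'/2,D'/2]$ for $D'>D$ (so that $\psi'>0$ on $[0,D/2]$), run the argument to get $\mu_{1,p}(M)\ge\bar\mu_1(m,p,\kappa,D')$, and let $D'\downarrow D$ using continuity of $\bar\mu_1$ in the interval length. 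Likewise, for property (1), concavity of $\psi$ is not needed and not automatic for all $\kappa$; what one actually uses is that $\inf_{s\in(0,D/2]}\psi(s)/s>0$ (since $\psi(s)/s\to\psi'(0)>0$ as $s\to0$ and $\psi>0$ on $(0,D/2]$), so multiplying $\psi$ by a large constant, which is harmless by the $(p-1)$-homogeneity of the ODE, dominates any fixed Lipschitz function with zero mean. With those two standard patches your argument is complete.
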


        \begin{proof}
            The proof is a slight modification of the proof in the K\"ahler case in \cite{WZ23}, so we leave the details to interested readers.
        \end{proof}

\section{The First Dirichlet Eigenvalue}
    
    In this section, we give the proof of Theorem \ref{thm m2}.
    The key tools are a comparison theorem for the distance function to the boundary, and  Barta's inequality.
    Let $M$ be a compact manifold with smooth boundary, and define the distance function to the boundary of $M$ by
    \begin{align*}
    d(x, \partial M)=\inf \Big \{ d(x,y): y \in \partial M \Big\},
    \end{align*}
    and the inradius of $M$ by $R=\mathop{\sup}\limits_{x\in M} d(x, \p M)$. 
 
    By choosing $\alpha=(p-1)|\nabla \varphi|^{p-2}$ and $\beta=|\nabla \varphi|^{p-2}$ in Theorem 5.7 of \cite{LW23}, we have the following lemma.
    \begin{lemma}
	\label{lm Dirichlet eigenvalue}
		Let $(M^m, g, I,J,K)$ be a compact quaternionic K\"ahler manifold with smooth  boundary, and $R$ be the inradius of $M$.
	    Suppose that the scalar curvature is bounded from below by $16m(m+2)\k$ for some $\kappa \in \R$, and the second fundamental form on $\partial M$ is bounded from below by $\Lambda \in \R$.
	  Let $p\in(1, \infty)$, and assume $\varphi$ is a smooth function on $[0,R]$ satisfying $\varphi'\geq 0$.
	    Then for any smooth function $\psi$ satisfying
        \begin{align*}
		\psi(x) \leq \varphi(d(x,\partial M)) 
        \ \ \text{for} \ \ x \in M,\ \ \text{and} \ \ \psi(x_0) =\varphi(d(x_0,\partial M)),
		\end{align*}
	it holds
    \begin{align*}
	    \Delta_p \psi (x_0)
	    \leq
        \bar{ \mathcal{F} }\varphi(d(x_0,\partial M)).
    \end{align*}
    Here one-dimensional operator $\bar{\mathcal{F}}$ is defined by
    \begin{align*}
	\bar{\mathcal{F}}\varphi
	=(p-1)|\varphi'|^{p-2}\varphi''
	-\big(4(m-1)T_{\k,\Lambda}
	+3T_{4\k,\Lambda}\big) |\varphi'|^{p-2} \varphi'
    \end{align*}
    for all $\varphi\in C^2([0, R])$, and $T_{\kappa,\Lambda}$ is defined by \eqref{1.6}.
    \end{lemma}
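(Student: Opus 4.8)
The plan is to obtain the lemma as a direct specialization of \cite[Theorem 5.7]{LW23}. That theorem is a comparison estimate for the function $\rho:=d(\cdot,\partial M)$ phrased for the two-parameter family of second-order operators
\[
\L_{\alpha,\beta}\psi:=\alpha\,\nabla^2\psi(n,n)+\beta\big(\Delta\psi-\nabla^2\psi(n,n)\big),\qquad n:=\frac{\nabla\psi}{|\nabla\psi|},
\]
defined wherever $\nabla\psi\neq0$, and the $p$-Laplacian is the member of this family singled out by the choice $\alpha=(p-1)|\nabla\psi|^{p-2}$, $\beta=|\nabla\psi|^{p-2}$. So the argument has just two steps: an algebraic identification of $\Delta_p$ with $\L_{\alpha,\beta}$, and an evaluation of the weights at the touching point.

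For the first step one computes, for smooth $\psi$ with $\nabla\psi(x)\neq0$,
\[
\Delta_p\psi=|\nabla\psi|^{p-2}\Delta\psi+(p-2)|\nabla\psi|^{p-4}\nabla^2\psi(\nabla\psi,\nabla\psi)=(p-1)|\nabla\psi|^{p-2}\nabla^2\psi(n,n)+|\nabla\psi|^{p-2}\big(\Delta\psi-\nabla^2\psi(n,n)\big),
\]
which is exactly $\L_{\alpha,\beta}\psi$ with $\alpha=(p-1)|\nabla\psi|^{p-2}$ and $\beta=|\nabla\psi|^{p-2}$. Since $\psi$ is smooth, $\psi\le\varphi\circ\rho$ on $M$, $\psi(x_0)=\varphi(\rho(x_0))$, and $\varphi'\ge0$, \cite[Theorem 5.7]{LW23} applies with these weights and yields
\[
\Delta_p\psi(x_0)=\L_{\alpha,\beta}\psi(x_0)\le\alpha\,\varphi''(\rho(x_0))-\big(4(m-1)T_{\k,\Lambda}(\rho(x_0))+3T_{4\k,\Lambda}(\rho(x_0))\big)\beta\,\varphi'(\rho(x_0)),
\]
the weights $\alpha,\beta$ on the right-hand side being evaluated at $x_0$. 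For the second step, the touching condition forces $\nabla\psi(x_0)=\nabla(\varphi\circ\rho)(x_0)=\varphi'(\rho(x_0))\nabla\rho(x_0)$; since $|\nabla\rho|=1$ and $\varphi'\ge0$ this gives $|\nabla\psi(x_0)|=\varphi'(\rho(x_0))=|\varphi'(\rho(x_0))|$. Hence $\alpha=(p-1)|\varphi'(\rho(x_0))|^{p-2}$ and $\beta=|\varphi'(\rho(x_0))|^{p-2}$, so the right-hand side above becomes
\[
\Big((p-1)|\varphi'|^{p-2}\varphi''-\big(4(m-1)T_{\k,\Lambda}+3T_{4\k,\Lambda}\big)|\varphi'|^{p-2}\varphi'\Big)\Big|_{\rho(x_0)}=\bar{\mathcal{F}}\varphi(\rho(x_0)),
\]
which is the claim. (The hypothesis $\varphi'\ge0$, rather than $\varphi'>0$, is precisely what lets one drop the absolute value around $\varphi'$ in the identity $|\nabla\psi(x_0)|=\varphi'(\rho(x_0))$ and so recover the factor $|\varphi'|^{p-2}\varphi'$ appearing in $\bar{\mathcal{F}}$.)

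The only point that is not purely formal is the degenerate locus where $\varphi'(\rho(x_0))=0$: there $\nabla\psi(x_0)=0$, the weights $\alpha,\beta$ blow up for $1<p<2$, and $\Delta_p\psi$ need not be classically meaningful. This never occurs in the use made of the lemma in the proof of Theorem \ref{thm m2}, where the relevant one-dimensional test function has $\varphi'>0$ on $[0,R)$; in general it is disposed of by a standard perturbation, adding to $\psi$ a small multiple of a smooth local barrier for $\rho$ at $x_0$ so that the gradient stays away from zero, applying the above, and passing to the limit. Everything of genuine geometric substance — the use of the scalar curvature lower bound through the quaternionic splitting of the normal bundle along the minimizing geodesic issuing from $\partial M$ (the span of $n$, the triple $In,Jn,Kn$, and the orthogonal remainder), the comparison Jacobi fields built from $c_\k$ and $c_{4\k}$, and the boundary barrier converting the second-fundamental-form bound $\Lambda$ into the shift from $T_\k,T_{4\k}$ to $T_{\k,\Lambda},T_{4\k,\Lambda}$ — is already carried out in \cite[Theorem 5.7]{LW23}, so there is nothing further to estimate.
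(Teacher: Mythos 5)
Your proposal is correct and follows exactly the paper's own route: the paper obtains this lemma in one line by choosing $\alpha=(p-1)|\varphi'|^{p-2}$ and $\beta=|\varphi'|^{p-2}$ in \cite[Theorem 5.7]{LW23}, which is precisely the identification of $\Delta_p$ with $\mathcal{L}_{\alpha,\beta}$ and the evaluation of the weights at the touching point that you carry out. Your additional remarks on the degenerate locus $\varphi'=0$ go beyond what the paper records but do not change the argument.
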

	
    Consider the following  one-dimensional eigenvalue problem on $[0, R]$ that
    \begin{equation}
    \label{4.1}
    \left \{
    \begin{aligned}
			& \bar{\mathcal{F}}\varphi
			= -\lambda |\varphi|^{p-2} \varphi,\\
			&\varphi(0)=0,\ \varphi'(R)=0,
    \end{aligned}
    \right.
    \end{equation}
    and denote by $\bar{\lambda}_1(m,p,\k,\Lambda, R)$ (written as $\bar{\lambda}_1$ for short)  the first eigenvalue of  problem \eqref{4.1}. Then it is easy to see that $\bar{\lambda}_1$ can be characterized by
    \begin{align*}
	\bar{\lambda}_1
	=\inf
	\bigg\{
	\frac
	{ \int_0^{R} |\phi'|^p C_{\k,\Lambda}^{4m-4} C_{4\k,\Lambda}^3 \ \mathrm{d} s }
	{ \int_0^{R} |\phi|^p C_{\k,\Lambda}^{4m-4}    C_{4\k,\Lambda}^3 \ \mathrm{d} s }
	\ \big|\phi \in W^{1,p}\big((0,R)\big)\backslash\{0\},\  \phi(0)=0
	\bigg\},
    \end{align*}
    where $C_{\kappa,\Lambda}$ is defined by \eqref{1.5}.
	
	\begin{proof}[Proof of Theorem \ref{thm m2}]
	    Let $\varphi$ be an eigenfunction corresponding to $\bar\lambda_1$, then
	    \[
		\bar{\mathcal{F}}\varphi
		= -\bar{\lambda}_1 |\varphi|^{p-2} \varphi
		\]
	    with $\varphi(0)=0$ and $\varphi'(R)=0$.
	    Similarly as in the proof of \cite[Lemma 4.2]{LW21}, we can choose $\varphi$ such that $\varphi(s) >0$ on $(0,R]$, and $\varphi'(s)>0$ on $[0, R)$. Define a trial function for $\lambda_{1,p}(M)$ by
	    \[
		v(x):=\varphi(d(x,\partial M)),
		\]
	  then Lemma \ref{lm Dirichlet eigenvalue} gives
		\begin{align}\label{4.2}
		\Delta_p v(x)
		\leq
		\bar{\mathcal{F}}\varphi(d(x,\partial M))
		\end{align}
away from the cut locus of $\p M$, and thus globally in the distributional sense, see \cite[Lemma 5.2]{LW21-b}.
	    Recall that $\varphi$ is an eigenfunction with respect to $\bar\lambda_1$, namely
	    \begin{align}\label{4.3}
		\bar{\mathcal{F}}\varphi(d(x,\partial M))
		=
		-\bar{\lambda}_1 |v(x)|^{p-2} v(x),
		\end{align}
		thus we conclude from \eqref{4.2} and \eqref{4.3} that
	    \begin{align*}
		\Delta_p v
		\leq
		-\bar{\lambda}_1 |v|^{p-2} v, \quad x\in M.
	    \end{align*}
        By the construction  of $v(x)$, we see $v(x)>0$ for $x\in M$, and $v(x)=0$ for $x\in \p M$. Thus we conclude from Barta's inequality (cf. \cite[Theorem 3.1]{LW21-b}) that
		\[
		\lambda_{1,p}(M) \geq \bar{\lambda}_1.
		\]
		The proof of  Theorem \ref{thm m2} is complete.
	\end{proof}

\section{Elliptic Proof of Theorem \ref{thm 1.1}}

    By using modulus of continuity for solutions to the heat equation, Li and the first author of this paper \cite{LW23} proved the lower bound estimates for the first nonzero eigenvalue on quaternionic K\"ahler manifolds (see Theorem \ref{thm 1.1}).
    In this section,  following Ni's method (see Section 6 of \cite{Ni13}), we give an elliptic proof of Theorem \ref{thm 1.1}.

    \begin{proof}[Elliptic proof of Theorem \ref{thm 1.1}]
   Let $u$ be the eigenfunction with respect to the first nonzero closed eigenvalue (or Neumann eigenvalue), and $\vp$ be the first Neumann eigenfunction of (\ref{thm 1.2}), and we consider the quotient of the oscillations of $u$ and $\vp$ 
    \[
    Q(x,y):=\frac{u(y)-u(x)}{\vp \big( \frac{d(x,y)}{2} \big) }
    \]
    on $\overline{M}\times\overline{M} \setminus \Delta$,
   where $\Delta:=\{ (x,x):x \in \overline{M} \}$ denotes the diagonal. 
    The function $Q$ can be extended to a set $(\overline{M}\times\overline{M} \setminus \Delta) \cup UM$. Here $UM:=\{(x,X):x\in M, |X|=1\}$. On $UM$, with the extension $Q(x,X)$ defined as
    \[
    Q(x,X):=\frac{ 2g(\nabla u(x),X) }{\vp'(0)}.
    \]
    Now we divide the proof into two cases.
    
    \textbf{Case 1.} the maximum of $Q$, which is clearly positive and denoted by $G$, is attained at some $(x_0, y_0)$ with $x_0 \neq y_0$. The Neumann condition and strict convexity of $M$ forces that both $x_0$ and $y_0$ must be in $M$. Indeed if $x_0\in \partial M$, then taking derivative along the exterior normal direction $\nu$ at $x_0$ yields
    \begin{align*}
    \frac{\partial}{\partial \nu}\bigg|_{x_0} Q(x,y_0)
    &=
    \frac{\partial}{\partial \nu}\bigg|_{x_0}
    \frac{u(y_0)-u(x)}{\vp \big( \frac{d(x,y_0)}{2} \big) }
    \\
    &=
    -\frac{G \vp'(\frac{d(x_0,y_0)}{2})  }
    { 2 \vp(\frac{d(x_0,y_0)}{2}) } 
    \frac{\partial}{\partial \nu} \big|_{x_0}
    d(x,y_0)\\
    &<0,    
    \end{align*}
    which is a contradiction with the maximum assumption. Here in the last inequality, we used the convexity assumption of $M$.

    As in the proof of Theorem \ref{thm 3.1}, there exists a length-minimizing geodesic $\gamma_0:[-s_0,s_0]\rightarrow M$ such that it connects $x_0$ and $y_0$ with $s_0=d(x_0,y_0)/2$. We can also choose an orthonormal basis $\{e_i(s)\}_{i=1}^{4m}$ for $T_{\gamma_0(s)}M$ with $e_1(s)=\gamma_0'(s)$ for each $s\in [-s_0, s_0]$ and 
    \[
    e_2(-s_0)=I\gamma_0'(-s_0),\quad 
    e_3(-s_0)=J\gamma_0'(-s_0),\quad 
    e_4(-s_0)=K\gamma_0'(-s_0).
    \]
    Then first derivative gives
    \[
    \nabla u(x_0)=\frac{G}{2} \vp'(s_0) e_1(-s_0), \quad
    \nabla u(y_0)=\frac{G}{2} \vp'(s_0) e_1(s_0). 
    \]

    For second derivatives, from
    \[
    \frac{\mathrm{d}^2}{ \mathrm{d} r^2 }\bigg|_{r=0}
    Q(\gamma_0(-s_0-r),\gamma_0(s_0+r)) \leq 0,
    \]
    we have
    \begin{align*}
    0
    \geq& 
    \frac{ u_{11}(y_0)-u_{11}(x_0) }{ \vp(s_0) }
    -\frac{ u(y_0)-u(x_0) }{ \vp^2(s_0) } \vp''(s_0)\\
    &- 2 \frac{ u_1(y_0)+u_1(x_0) }{ \vp^2(s_0) } \vp'(s_0)
    +2G \big( \frac{ \vp'(s_0) }{ \vp(s_0) } \big)^2,
    \end{align*}
    thus we have
    \begin{equation}
    \label{eqn e1}
    0 \geq u_{11}(y_0)-u_{11}(x_0)
    -G\vp''(s_0) .
    \end{equation}
    For $i=2,3,4$, we consider the variation
    \[
    \gamma(r,s)=\text{exp}_{\gamma_0(s)}(r\eta(s)e_i(s)), 
    \]
    where $\eta(s)=\frac{c_{4\k}(s)}{ c_{4\k}(s_0) }$. This variation gives
    \[
    u_{ii}(y_0)-u_{ii}(x_0)
    -\frac{G}{2} \vp'(s_0) 
    \int_{-s_0}^{s_0} (\eta')^2-\eta^2 R(e_1,e_i,e_1,e_i)\ \mathrm{d}s \leq 0.
    \]
    For $5\leq i \leq 4m$, we consider the variation 
    \[
    \gamma(r,s)=\text{exp}_{\gamma_0(s)}(r\zeta(s)e_i(s)), 
    \]
    where $\zeta(s)=\frac{ c_{\k}(s) }{ c_{\k}(s_0) }$. 
    This variation produces 
    \[
    u_{ii}(y_0)-u_{ii}(x_0)
    -\frac{G}{2} \vp'(s_0) 
    \int_{-s_0}^{s_0} (\zeta')^2-\zeta^2 R(e_1,e_i,e_1,e_i) \ \mathrm{d}s \leq 0.
    \]

    Summing over $2\leq i \leq 4$, we obtain
    \[
    \sum_{i=2}^4 \big( u_{ii}(y_0)-u_{ii}(x_0) \big)
    -\frac{G}{2} \vp'(s_0) 
    \int_{-s_0}^{s_0} 3(\eta')^2
    -\eta^2 \sum_{i=2}^4 R(e_1,e_i,e_1,e_i)\ \mathrm{d}s \leq 0.
    \]
    Using the assumption $\sum_{i=2}^4 R(e_1,e_i,e_1,e_i)\geq 12 \k$, we have
    \begin{equation}
    \label{eqn e2}
    \sum_{i=2}^4 \big( u_{ii}(y_0)-u_{ii}(x_0) \big)
    +3G \vp'(s_0) T_{4\k}(s_0) \leq 0.
    \end{equation}
    Similarly, summing over $5\leq i \leq 4m$, we get
    \begin{equation}
    \label{eqn e3}
    \sum_{i=5}^{4m} \big( u_{ii}(y_0)-u_{ii}(x_0) \big)
    +4(m-1)G \vp'(s_0) T_{\k}(s_0)\leq 0.
    \end{equation}

    Combining (\ref{eqn e1}), (\ref{eqn e2}) and (\ref{eqn e3}), we have
    \[
    \Delta u(y_0)-\Delta u(x_0) -G \vp''(s_0)+3GT_{4\k}(s_0)\vp'(s_0)+4G(m-1)T_{\k}(s_0)\vp'(s_0)\leq 0,
    \]
    that is 
    \[
    -\mu_{1,2}(M) (u(y_0)-u(x_0)) +G \bar{\mu}_1(m,2,\k,D) \vp(s_0) \leq 0,
    \]
    which proves $\mu_{1,2}(M) \geq \bar{\mu}_{1}(m,2,\k,D)$.

    \textbf{Case 2.} the maximum of $Q$ is attained at some $(x_0,X_0)\in UM$. 
    It is easy to see that $X_0= \frac{\nabla u(x_0) }{|\nabla u(x_0)|}$ and $G=2|\nabla u(x_0)|$. By the assumption, we know $x_0\in M$. In fact if $x_0\in \partial M$, then taking derivarive along exterior normal direction $\nu$ at $x_0$ yields
    \begin{align*}
       \frac{\partial}{\partial \nu}\bigg|_{x_0} |\nabla u (x)|^2
    &=2 g(\nabla_{\nabla u(x)} \nabla u(x),\nu )\big|_{x_0}\\
    &=-2 \operatorname{II} (\nabla u(x_0), \nabla u(x_0))<0,    
    \end{align*}
    contradicting with the maximum assumption, where $\operatorname{II}$ is the second fundamental form of $M$ at $x_0$.

    Now pick up an orthonormal frame $\{e_i\}_{i=1}^{4m}$ at $x_0$ so that $e_1=X_0$.  
    We also parallel transport it to a neighborhood of $x_0$.

    Since $|\nabla u(x)|^2$ attains its maximum at the interior point $x_0$, we have
    \[
    \nabla |\nabla u|^2(x_0)=0,
    \]
    hence 
    \[
    u_{1k}(x_0)=0
    \]
    for any $1\leq k \leq 4m$. Moreover, the maximum principle concludes for any $2\leq k \leq 4m$, 
    \begin{equation}
    \label{eqn e4}
        0\geq u_1 u_{kk1}+|u_1|^2 R(e_1,e_k,e_1,e_k).
    \end{equation}

    Let $x(s)=\text{exp}_{x_0}(-se_1),y(s)=\text{exp}_{x_0}(se_1)$ and $h(s)=Q(x(s),y(s))$. Since $Q$ achieves its  maximum at $(x_0,X_0)$, we have that 
    \[
    h(s)\leq h(0)=G, \quad s\in(-\varepsilon,\varepsilon),
    \]
    which implies that $\lim\limits_{s\rightarrow 0} h'(s)=0$ and $\lim\limits_{s\rightarrow 0} h''(s)\le0$.

    Direct calculation shows that
    \[
    h'(s)
    =\frac{ g(\nabla u(y(s)), y'(s)) 
    - g(\nabla u(x(s)), x'(s))}
    {\vp(s)}-h(s) \frac{\vp'(s)}{\vp(s)}
    \]
    and
    \begin{align*}
       h''(s)
       =&\frac{\nabla_{e_1 e_1}^2 u(y(s))-\nabla_{e_1 e_1}^2 u(x(s))}{\vp(s)}\\
       &-2 \frac{g(\nabla u(y(s)), y'(s)) 
       -g(\nabla u(x(s)), x'(s))}{\vp(s)} \frac{\vp'(s)}{\vp(s)}\\
       &-h(s) \frac{\vp''(s)}{\vp(s)}+2 h(s)(\frac{\vp'(s)}{\vp(s)})^2\\
       =&\frac{\nabla_{e_1 e_1}^2 u(y(s))-\nabla_{e_1 e_1}^2 u(x(s))}{\vp(s)}
       -2\frac{h'(s)}{\vp(s)}\vp'(s)
       -h(s)\frac{\vp''(s)}{\vp(s)}.
    \end{align*}
    Observing that $\lim\limits_{s \rightarrow 0} \frac{h'(s)}{\vp(s)}=h''(0)$, 
    the second equation implies that
    \[
    h''(0)=2 u_{111}(x_0)-2 h''(0)-G \lim\limits_{s \rightarrow 0} \frac{\vp''(s)}{\vp(s)}.
    \]
    From Equation (\ref{1.3}), it follows
    \[
    \lim\limits_{s \rightarrow 0} \frac{\vp''(s)}{\vp(s)}
    =\lim\limits_{s \rightarrow 0}
    \big( 4(m-1)T_{\k}(s)+3T_{4\k}(s) \big) 
    \frac{\vp'(s)}{\vp(s)}-\bar{\mu}_1
    =4(m-1)\k+12\k-\bar{\mu}_1.
    \]
    Then we have
    \begin{equation}
    \label{eqn e5}
    2u_{111}(x_0) -G( 4(m-1)\k+12\k ) +G \bar{\mu}_1 \leq 0.
    \end{equation}
    Combining (\ref{eqn e4}) and (\ref{eqn e5}), we derive
    \[
    2g( \nabla u, \nabla \Delta u) +2 |u_1|^2 \sum_{k=2}^{4m} R(e_1,e_i,e_1,e_i) -2 |u_1|^2 (4(m-1)\k+12\k) +2 |u_1|^2 \bar{\mu}_1 \leq 0,
    \]
    which also implies $\mu_{1,2}(M) \geq \bar{\mu}_{1}(m,2,\k,D)$.
    \end{proof}

    \begin{remark}
   Similarly as in \cite{SWW19, ZW17}, the elliptic proof also works for Theorem \ref{thm 1.3} when $1<p<2$.
    \end{remark}
    
	\bibliographystyle{plain}
	\bibliography{ref}
	
\end{document}